\newtheorem{theorem}{Theorem}
\newtheorem{proposition}[theorem]{Proposition}
\newtheorem{lemma}[theorem]{Lemma}
\newtheorem{definition}{Definition}
\newtheorem{corollary}{Corollary}
\newtheorem{example}{Example}
\newtheorem{remark}{Remark}
\newtheorem{conjecture}{Conjecture}
\newtheorem{notation}{Notation}
\newtheorem{problem}{Problem}
\newcommand{\CC}{\mathcal{C}}
\newcommand{\bR}{\mathbb{R}}
\newcommand{\bC}{\mathbb{C}}
\newcommand{\bZ}{\mathbb{Z}}
\newcommand{\ee}{\end{equation}}
\newcommand {\al}{\alpha}
\newcommand{\si}{\sigma}
\newcommand{\D} {\mathcal D}
\newcommand {\OO} {\mathcal O}
\begin{document}

\numberwithin{equation}{section}

\title[In search of  Newton-type inequalities]{In search of  Newton-type inequalities}
\author[O.~Katkova]{Olga Katkova}

\address{Department of Mathematics, University of Massachusetts Boston}
\email{woman.math@gmail.com}

\author[B. Shapiro]{Boris Shapiro}

\address{   Department of Mathematics,     Stockholm University,        S-10691, Stockholm, Sweden}
\email{shapiro@math.su.se}

\author[A.~Vishnyakova]{Anna Vishnyakova}

\address{ Department of Mathematics,  Holon Institute of Technology, 
Holon, Israel}
\email{annalyticity@gmail.com}

\date{\today}
\subjclass[2020]{Primary 26C10; Secondary 12D10}

\maketitle

%\begin{document}
%\title[Multiplier sequences and logarithmic mesh]{Multiplier sequences and logarithmic mesh}
\dedicatory {\hskip2cm To our friend and coauthor Vlado Petrov Kostov}

\begin{abstract}
In this paper, we prove a number of results providing either necessary or sufficient 
conditions guaranteeing that the number of real roots  of real polynomials of a given 
degree  is either less or greater than a given number. We also provide counterexamples 
to two earlier conjectures refining Descartes rule of signs. 
\end{abstract}

\maketitle

\smallskip

\section{Introduction}

The main motivation for the present paper comes from  the classical Newton inequalities and 
Hutchinson's theorem, see \cite{Hu,Ku} together with three  conjectures formulated in 
\cite{Sh}, see \S~\ref{sec:app2} and  \cite{FNS}.

\medskip
Our present set-up is as follows. 

\begin{notation} 
Denote by $\bR[x]$ the linear space of all polynomials with real coefficients, and by $\bR^{+}[x]$ the  cone of all polynomials with positive coefficients. Denote by $\bR_n[x]$ the (affine) space of monic degree $n$ polynomials with real coefficients and by $\bR^+_n[x]\subset \bR_n[x]$  the orthant of monic polynomials of degree $n$ with positive coefficients.   

Denote by $\widehat \bR_n[x]$ the (affine) space of monic degree $n$ polynomials with real coefficients and constant term $1$ and by $\widehat \bR^+_n[x]\subset \bR_n[x]$  the orthant of monic polynomials of degree $n$ with positive coefficients and constant term $1$.  

\smallskip
Now let $\Delta_n$ denote the discriminant of the family of polynomials $x^n+a_{n-1}x^{n-1}+\cdots +a_0,$  i.e., $\Delta_n \in \bZ[a_{n-1}, \dots, a_{0}]$  is the unique (up to constant factor) irreducible polynomial such that $x^n+a_{n-1}x^{n-1}+\cdots +a_0$   has a root of multiplicity greater than $1$ implies that $\Delta_k(a_{n-1},\dots, a_{0}) = 0$. Denoting by $\mathfrak D_n\subset \bR_n[x]$ the zero locus of $\Delta_n$, we will be mainly interested in its subset $\D_n \subset \mathfrak D_n$ 
consisting of polynomials in $\bR_n[x]$ having a {\it multiple real zero}. (It is well-known  that $\mathfrak D_n\setminus \D_n$ has codimension $1$ in $\mathfrak D_n$. Indeed the complement $\mathfrak D_n\setminus \D_n$  consists of real monic polynomials of degree $n$ having at least one pair of complex conjugated roots each having multiplicity at least $2$ and therefore has real codimension $2$ in $\bR_n[x]$ and real codimension $1$ in $\mathfrak D_n$). 

Further, let $\D^+_n\subset \bR^+_n[x]$ denote the {\it positive discriminantal locus} which is the set of monic polynomials of degree $n$ with positive coefficients having {\it at least one real multiple root}. In other words, $\D^+_n$ is the restriction of $\D_n$ to $\bR^+_n[x]$. Finally, let $\widehat \D_n$ be the restriction of $\D_n$ to $\widehat \bR_n[x]$ and $\widehat \D_n^+$ be the restriction of $\D_n$ to $\widehat \bR_n^+[x]$.  

\medskip
For each polynomial $P(x)=\sum_{k=0}^{n} a_k x^k $ with positive coefficients, we define the sequence of (positive) numbers\\

\begin{equation}
\label{a1}
q_k=q_k(P)=\frac{a_k^2}{a_{k-1}a_{k+1}}, \; k=1,\dots, n-1. 
\end{equation}

\end{notation}

In what follows we will try to generalize and reinterpret  the following classical results of J.~I.~Hutchinson \cite{Hu} and I.~Newton, see also Proposition~\ref{prop:old} below.\\ 

\noindent
{\bf Theorem H.} An entire function $p(x) = a_0 +a_1 x+\dots +a_n x^n +\dots$ with strictly positive coefficients has the property that all of its finite segments $a_i x^i+a_{i+1}x^{i+1}+...+a_jx^j$  have only real roots if and only if  for $k = 1, 2, \dots$, 

\begin{equation}
\label{eq:Hu}
q_k\ge 4. 
\end{equation}

\medskip

\noindent
{\bf Theorem N.} 
Let $P(x)=\sum_{k=0}^{n} a_k x^k  \in \mathbb{R}_{+}[x]$ be a  polynomial with all real roots. Then for $k=1,2,\ldots, n-1,$

\begin{equation}
\label{a9}
q_{k} \geq \frac{k+1}{k}\cdot \frac{n-k+1}{n-k}.
\end{equation}

%\end{THEO}

\smallskip
An  easy to prove statement is that for any positive integer $n$, the set $\bR_n[x] \setminus \D_n$ of all real monic degree $n$ polynomials with all simple real roots consists of  $\left[\frac {n}{2}\right] +1$ contractible connected components enumerated by the number of simple real zeros of polynomials belonging to the respective component.

\smallskip
Similar results hold for   $\bR_k^+[x]$ and $\widehat \bR_k^+[x]$, see Lemma~\ref{lm:compl} and Corollary~\ref{cor:compl} below.

\smallskip
\begin{notation}
Fix a  sequence of (strict) inequality signs $\bar \si=(\si_1,\si_2,\dots, \si_{n-1})$ of length $n-1$, where each entry is either $<$ or $>$. (One can interpret $\bar \si$ as a binary sequence).  Further let ${\bar \epsilon}=(\epsilon_1,\epsilon_2,\dots, \epsilon_{n-1})$ be a sequence of positive numbers.  Given a pair $(\bar \si, \bar \epsilon)$,  define the subset $\widehat \bR^+_{\bar \si, \bar \epsilon} \subset \widehat \bR^+_n[x]$ as the set of all polynomials $P(x)=x^n+a_{n-1}x^{n-1}+\dots +a_1 x+ 1$ satisfying the system of inequalities $$\{q_j \lessgtr_{\bar \si}  \epsilon_j\},\; j=1,\dots, n-1$$ where the inequality sign in defined by $\si_j$. %One can easily observe that for any pair  $(\bar \si, \bar \epsilon)$ as above, the set $\bR^+_{\bar \si, \bar \epsilon}$ is open and contractible, see below. 

\smallskip
Consider the map $\text{Log} |\cdot |: \widehat \bR^+_n[x] \to \bR^{n-1}$ sending the polynomial $P(x)=x^n+a_{n-1}x^{n-1}+a_{n-2}x^{n-2}+\dots + a_{1}x+1$ with positive coefficients to the $(n-1)$-tuple 
$(\log a_1, \log a_2, \dots ,$ $ \log a_{k-1})$. (The map $\text{Log}$ is a diffeomorphism between the source and the target spaces). Denote the coordinates in the image space $\bR^{n-1}$  by $(\al_1,\al_2,\dots, \al_{n-1})$. 

\end{notation}

\begin{remark} {\rm Observe that  logarithmizing the inequality in the Hutchinson theorem we obtain the linear inequality 
$$2\al_k-\al_{k-1}-\al_{k+1}\ge \ln 4$$ and logarithmizing Newton's inequality we get 
$$2\al_k-\al_{k-1}-\al_{k+1}\ge \ln ((k+1)(n-k+1))-\ln (k(n-k)).$$
}
\end{remark} 

\medskip
The prototype result which we want to generalize in this paper is as follows. Denote by $\Sigma_n^+\subset \bR_n^+[x]$ the set of all degree $n$ real-rooted polynomials with positive coefficients. 

\smallskip
\begin{proposition}[see Proposition 7 of \cite{KoSh}]\label{prop:old}
 
 \noindent
 {\rm (i)} The polyhedral cone described by (the logarithm of) Hutchinson’s inequalities \eqref{eq:Hu}  is the maximal polyhedral cone contained in $\text{Log}\,(\Sigma_n^+)$. 
 
 \smallskip
 \noindent
 {\rm (ii)} The minimal polyhedral cone  containing $\text{Log}\,(\Sigma_n^+)$ is given by (the logarithm of) Newton’s inequalities.
\end{proposition}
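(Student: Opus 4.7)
The proof naturally splits into the containments and the sharpness (maximality/minimality) assertions. Both containments are immediate from the stated classical theorems: in \textup{(i)}, if $P\in\bR_n^+[x]$ satisfies $q_k(P)\ge 4$ for every $k$, Theorem~H implies $P$ is real-rooted, so $H\subset\text{Log}\,(\Sigma_n^+)$; in \textup{(ii)}, Theorem~N is exactly the statement $\text{Log}\,(\Sigma_n^+)\subset N$. Since $H$ and $N$ share the recession cone $\{\al:2\al_k-\al_{k-1}-\al_{k+1}\ge 0,\,k=1,\dots,n-1\}$, both sharpness assertions reduce to choosing the $(n-1)$ facet shifts optimally.

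For the minimality in \textup{(ii)}, I would use that $(1+x)^n\in\Sigma_n^+$ attains equality in every Newton inequality simultaneously: from $a_k=\binom{n}{k}$ one computes
\[
q_k\bigl((1+x)^n\bigr)=\frac{\binom{n}{k}^2}{\binom{n}{k-1}\binom{n}{k+1}}=\frac{(k+1)(n-k+1)}{k(n-k)}.
\]
Hence $\text{Log}\bigl((1+x)^n\bigr)$ sits at the unique vertex of $N$, so any polyhedron with the stated recession cone that contains $\text{Log}\,(\Sigma_n^+)$ must also contain $N$. This completes \textup{(ii)}.

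The heart of the argument, and where I expect the main technical obstacle, is the maximality in \textup{(i)}: for every $k\in\{1,\dots,n-1\}$ and every $c<\ln 4$, the polyhedron obtained from $H$ by lowering the $k$-th Hutchinson bound to $c$ must fail to be contained in $\text{Log}\,(\Sigma_n^+)$. Equivalently, for every $\eta>0$ one must produce $P\in\bR_n^+[x]\setminus\Sigma_n^+$ with $q_k(P)$ arbitrarily close to $4$ from below and $q_j(P)\ge 4$ for $j\ne k$. The construction I would attempt starts from the Hutchinson apex $P_H(x)=\sum_k 2^{k(n-k)}x^k$ (which saturates every $q_k=4$) and deforms it along a one-parameter family in which $a_k$ decreases while the remaining coefficients are adjusted to keep $q_j\ge 4$ for $j\ne k$. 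The trinomial block $a_{k-1}x^{k-1}+a_kx^k+a_{k+1}x^{k+1}$ extracted from the deformed $P$ carries the main dynamics: as $q_k$ drops below $4$ the underlying quadratic $a_{k-1}+a_kx+a_{k+1}x^2$ acquires a pair of complex conjugate roots, and a careful tracking (as in the elementary $n=3$ discriminant computation, where one fixes $a_n=a_0=1$, sets $a_{k+1}=a_k^2/(4-\eta)a_{k-1}$, and sends $a_k\to\infty$) should show that a nonreal pair survives in $P$ along a well-chosen sub-family of the deformation. The real obstacle is therefore global control of the roots of $P$, not just of the isolated quadratic; this is verifiable explicitly for small $n$ and in general I would expect to need a more conceptual argument, perhaps an implicit-function analysis of the discriminantal locus $\D^+_n$ along the deformation, together with a separate treatment of the boundary indices $k=1$ and $k=n-1$.
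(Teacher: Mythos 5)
First, a point of reference: the paper does not actually prove this proposition --- it is quoted from Proposition 7 of \cite{KoSh} --- so there is no internal proof to compare yours with. Judged on its own merits, your argument for the two containments and for the minimality in (ii) is correct and complete within the relevant class of cones (translates of the fixed orthant in the $\kappa_j=\log q_j$ coordinates): since $(1+x)^n$ saturates every Newton inequality simultaneously, its logarithmic image is the vertex of the Newton orthant, and raising any single bound excludes that vertex.

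The maximality in (i) is where the genuine gap lies, and the construction you sketch starts from the wrong point. The ``Hutchinson apex'' $P_H(x)=\sum_k 2^{k(n-k)}x^k$ is not, in general, on the boundary of $\Sigma_n^+$: already for $n=3$ one has $P_H=1+4x+4x^2+x^3=(x+1)(x^2+3x+1)$, which has three simple real roots, so $P_H$ lies in the \emph{interior} of $\Sigma_3^+$ and every sufficiently small deformation --- in particular decreasing $a_k$ slightly, which pushes $q_k$ just below $4$ while keeping $q_j\ge 4$ for $j\ne k$ --- remains real-rooted. Hence near its starting point your one-parameter family never leaves $\Sigma_n^+$, and there is no nonreal pair in the trinomial block to track. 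What the maximality actually requires is, for each $k$ and each $\eta>0$, a polynomial with positive coefficients, $q_k\ge 4-\eta$, $q_j\ge 4$ for $j\ne k$, lying \emph{outside} $\Sigma_n^+$; the standard way to produce one is to degenerate toward the boundary of the coefficient orthant rather than to perturb the apex. For instance, set $a_{k-1}=a_{k+1}=1$, $a_k=2\sqrt{1-\delta}$, and $a_{k\pm(i+1)}=\epsilon^{i^2}$ for $i\ge 1$ (truncated at the ends $0$ and $n$). Then $q_k=4(1-\delta)$, every other $q_j$ is a negative power of $\epsilon$ (hence $\ge 4$ for $\epsilon$ small), and as $\epsilon\to 0$ the polynomial converges coefficientwise to $x^{k-1}\bigl(x^2+2\sqrt{1-\delta}\,x+1\bigr)$, whose quadratic factor has a complex-conjugate pair of roots; by continuity of roots the degree-$n$ polynomial is not real-rooted for all small $\epsilon>0$. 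Your own parenthetical $n=3$ computation (normalizing $a_0=a_3=1$ and sending $a_1\to\infty$) is a rescaled instance of exactly this degeneration and is the part of your sketch worth keeping; the apex-perturbation framing, and the appeal to an unspecified implicit-function analysis of $\D_n^+$, should be discarded.
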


\medskip
\begin{notation} For $n$ even, consider two filtrations of $\bR_n[x]$, 
$$\OO_{< 2}\subset \OO_{< 4} \subset \dots \subset \OO_{< n}\subset\bR_n[x]\supset \OO_{\ge 2}\supset \OO_{\ge 4} \dots \supset \OO_{\ge n},$$
where $\OO_{< 2\ell}$ (resp. $\OO_{\ge 2\ell}$) is the  set   of all polynomials in $\bR_n[x]$ with fewer than (resp. at least) $2\ell$ real roots (counting multiplicities).

Analogously, 
$$ \widehat{\OO}_{< 2}\subset \widehat{\OO}_{< 4} \subset \dots \subset \widehat{\OO}_{< n}\subset\widehat{\bR}_n[x]\supset \widehat{\OO}_{\ge 2}\supset \widehat{\OO}_{\ge 4} \dots \supset \widehat{\OO}_{\ge n}$$
is the restriction of the above filtrations to  $\widehat{\bR}_n[x]$; 

$$ \OO_{< 2}^+\subset \OO_{< 4}^+ \subset \dots \subset \OO_{< n}^+\subset\bR_n^+[x]\supset \OO_{\ge 2}^+\supset \OO_{\ge 4}^+ \dots \supset \OO_{\ge n}^+$$
is the restriction of the above filtrations to  ${\bR}_n^+[x]$; 

$$ \widehat{\OO}_{< 2}^+\subset \widehat{\OO}_{< 4}^+ \subset \dots \subset \widehat{\OO}_{< n}^+\subset\widehat{\bR}_n^+[x]\supset \widehat{\OO}_{\ge 2}^+\supset \widehat{\OO}_{\ge 4}^+ \dots \supset \widehat{\OO}_{\ge n}^+$$
is the restriction of the above filtrations to  $\widehat{\bR}_n^+[x]$. 

\smallskip
Similarly, for $n$ odd, we have the following two filtrations of $\bR_n[x]$, 
$$ \OO_{< 3}\subset \OO_{< 5} \subset \dots \subset \OO_{< n}\subset \bR_n[x]\supset \OO_{\ge 3}\supset \OO_{\ge 5} \dots \supset \OO_{\ge n},$$
where $\OO_{< 2\ell+1}$ (resp. $\OO_{\ge 2\ell+1}$) is the  set   of all polynomials in $\bR_n[x]$ with fewer than (resp. at least) $2\ell+1$ real roots (counting multiplicities) and their restrictions to $\widehat{\bR}_n[x]$, ${\bR}_n^+[x]$, and $\widehat{\bR}_n^+[x]$. 
 
\end{notation}

\begin{remark} {\rm One can easily see that the terms of the left filtration are open while the terms of the right filtration are closed in $\bR_n[x], \widehat {\bR}_n[x], \bR_n^+[x]$, and $\widehat {\bR}^+_n[x]$ resp. Furthermore,  the first and the second filtrations are complementary in the following sense. For any positive integer $n$ and any $2\le \ell\le n,\; \ell \equiv n \mod 2$ one has, 
$$\OO_{< \ell} \cup \OO_{\ge \ell}=\bR_n[x]\quad  \text{and} \quad \OO_{< \ell} \cap \OO_{\ge \ell}=\emptyset.$$  The intersection $H_\ell=\overline{\OO}_{< \ell}\cap \OO_{\ge \ell}$ is a semi-algebraic hypersurface in $\bR_n[x]$ consisting of all polynomials having exactly $\ell$ real roots counting multiplicities with at least one non-simple real root. (Here $\overline{\OO}_{< \ell}$ stands for the closure of the open 
domain ${\OO}_{< \ell}$).  

The hypersurface $H_\ell$ splits into $\ell-1$ smooth parts depending on the location of the double real root among all the real roots.   
}
\end{remark}

\begin{definition} By a \textcolor{blue} {closed non-convex polyhedral cone} we mean the union of an arbitrary finite collection of closed polyhedral cones. 
\end{definition} 

\begin{definition} We say that a closed (probably non-convex) polyhedral cone  $\CC\subset \bR_n[x]$ 
contained  in $\OO_{\ge \ell}$ (resp.  $\OO_{< \ell}$) \textcolor{blue} {is inscribed} in $\OO_{\ge \ell}$ 
(resp.  $\OO_{< \ell}$) if one cannot freely deform any of its vertices and still obtain a cone contained in the domain.  

Analogously, we say that  a closed (probably non-convex) polyhedral cone $\CC\subset \bR_n[x]$ containing  
$\OO_{\ge \ell}$ (resp.  $\OO_{< \ell}$) \textcolor{blue} {is circumscribed} if one can not freely deform any 
of its vertices and still obtain a cone containing the domain.
\end{definition} 

%\begin{proposition}\label{prop:imp} For an arbitrary binary sequence $\bar \si=(\si_1,\si_2,\dots, \si_{k-1})$ of length $k-1$, the set $\E_{\bar \si}$ is non-empty and contractible. 
%\end{proposition} 

%\begin{proof} 
%Non-emptiness is not clear. Contractibility seems easier. By item (iii) of Lemma~\ref{lm:simp},  one has that for any  pair  $\bar \epsilon^{(1)} \lessgtr_{\bar \si} \bar \epsilon^{(2)}$, 
%if $\bar \epsilon^{(2)}$ lies in $\E_{\bar \si}$ then so does $\bar \epsilon^{(1)}$. For a given $\bar \epsilon^{(2)}$, the set $\Theta_{\bar \epsilon^{(1)}}$ of all $\bar \epsilon^{(1)}$ which satisfy the relation $\bar \epsilon^{(1)} \lessgtr_{\bar \si} \bar \epsilon^{(2)}$ form a contractible set (diffeomorphic to a closed orthant). Now if we have any compact subset   $S\subset \E_{\bar \si}$. Then there exists a (non-unique) $\bar \epsilon(S)$ such that  $\bar \epsilon(S) \lessgtr_{\bar \si} \bar \epsilon$ for any 
%$\bar \epsilon\in S$. Thus $S$ can be deformed inside $\E_{\bar \si}$ to $\Theta_{\bar \epsilon(S)}$ where it can be contracted to a point.
%\end{proof} 

%\begin{corollary}\label{cor:imp}
%The connected component 
%$\Omega^+_{k, \ell}$ in which lies $\bR^+_{\bar \si, \bar \epsilon}$ is the same. We call it the {\it associated component} of $\bar \si$. 
%\end{corollary} 

%Question. How to define $\Omega^+_{k, \ell}$ from $\bar \si$?

The main question we consider below is as follows. 

\begin{problem}\label{prob:main} Find interesting examples of inscribed and circumscribed polyhedral cones for the domains $\OO_{< \ell}$ and $\OO_{\ge \ell}$. \end{problem} 

Observe that any such cone  provides a generalization of either Hutchinson theorem or Newton inequalities respectively.   
Below we present some partial, but non-trivial results related to Problem~\ref{prob:main}. We are currently looking for a more conceptual approach to this question. 

\medskip
\noindent
{\bf Acknowledgements.} 
 The research of the second named author was supported by the Swedish Research Council grant 2021-04900. He is grateful to Robin Stoll for topological consultations. All three authors  want to acknowledge the importance of their long-term contacts with the Professor V.~Kostov of Universit\'e  C\^ote d'Azur to whom this note is dedicated.  We highly appreciate the relevant comments of the anonymous referee which allowed us to improve the overall exposition and correct a number of small mistakes.

\section{Preliminary results} 

\begin{lemma}\label{lm:compl} {\rm The complement  $\Omega^+_n:=\bR^+_n[x]\setminus \D^+_n$ is the union of  $\left[\frac{n}{2}\right]+1$ open contractible components  $\Omega^+_{n, \ell},\; 0\le \ell\le n,\; \ell\equiv n \mod 2$, where  $\Omega^+_{n, \ell}$ is the set of  all monic degree $n$ polynomials with positive coefficients having $\ell$ simple real (negative) roots. }
\end{lemma}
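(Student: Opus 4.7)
\medskip
\noindent\textbf{Proof plan.}
The plan unfolds in three stages: (a) identify the strata $\Omega^+_{n,\ell}$ combinatorially, (b) verify that they are pairwise disjoint and open, and (c) prove each is contractible by producing an explicit strong deformation retract onto a convex section.

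For (a)--(b), I apply Descartes' rule of signs to $P \in \bR^+_n[x]$: the absence of sign changes in the coefficient sequence forbids positive real roots, and $a_0 > 0$ rules out $0$ as a root, so every real root is strictly negative. Since non-real roots come in complex-conjugate pairs, the number $\ell$ of real roots (counted with multiplicity) satisfies $\ell \equiv n \pmod 2$; for $P \in \Omega^+_n$ these $\ell$ real roots are simple. This yields the $\left[\frac{n}{2}\right]+1$ strata $\Omega^+_{n,\ell}$, which are pairwise disjoint, cover $\Omega^+_n$, and are open because the number of simple real roots is locally constant on the complement of $\D^+_n$ (implicit function theorem).

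For (c), set $k = (n-\ell)/2$. For $P \in \Omega^+_{n,\ell}$, denote its simple negative real roots by $-r_1(P) < \cdots < -r_\ell(P) < 0$; by the implicit function theorem the map $P \mapsto (r_1(P),\ldots,r_\ell(P))$ is continuous with values in the convex (hence contractible) set $\mathcal{R}_\ell := \{(r_1,\ldots,r_\ell) : 0 < r_1 < \cdots < r_\ell\}$. I then define the continuous, idempotent reference map
$$\sigma(P)(x) := \prod_{i=1}^{\ell}(x + r_i(P)) \cdot (x^2+1)^k ,$$
which belongs to $\Omega^+_{n,\ell}$ (positive coefficients as a product of factors with positive coefficients, and exactly the $\ell$ simple real roots $-r_i(P)$). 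Its image $\sigma(\Omega^+_{n,\ell})$ is homeomorphic to $\mathcal{R}_\ell$, hence contractible.

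Finally, I form the homotopy $H(P,t) := (1-t) P + t \sigma(P)$. Its coefficients are convex combinations of positive numbers, hence positive. Factoring out the common divisor $R(x) := \prod_{i=1}^\ell (x+r_i(P))$ of $P$ and $\sigma(P)$ yields
$$H(P,t)(x) = R(x) \cdot \bigl((1-t) Q_P(x) + t (x^2+1)^k\bigr),$$
where $Q_P := P/R$ is a monic real polynomial of even degree $2k$ without real roots, hence strictly positive on $\bR$. Since $(x^2+1)^k$ is also strictly positive on $\bR$, their convex combination is strictly positive and therefore has no real roots. Thus $H(P,t)$ has exactly the $\ell$ simple real roots $-r_1(P),\dots,-r_\ell(P)$ and lies in $\Omega^+_{n,\ell}$, providing the desired strong deformation retract onto the contractible set $\sigma(\Omega^+_{n,\ell})$. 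The main obstacle is to keep $H(P,t)$ inside $\Omega^+_{n,\ell}$ throughout; this is dissolved by the key observation that the set of strictly positive monic even-degree polynomials is convex, which legitimizes the linear interpolation of the ``complex-root parts.''
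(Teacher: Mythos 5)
Your overall strategy is sound and in fact more direct than the paper's: the paper first contracts a subset $\widetilde{\Omega^+_{n,\ell}}$ of explicitly factored polynomials by linearly interpolating the factors, and then must move an arbitrary compact subset into $\widetilde{\Omega^+_{n,\ell}}$ via the shift $p(x)\mapsto p(x+t)$, invoking Whitehead's theorem and a result of Lundell--Weingram to pass from ``every compact subset is contractible'' to contractibility. Your canonical retraction $\sigma$ short-circuits that detour: the factorization $H(P,t)=R\cdot\bigl((1-t)Q_P+t(x^2+1)^k\bigr)$ correctly shows that the real-root structure is preserved along the whole homotopy, $\sigma$ is idempotent and continuous, and its image is homeomorphic to the convex set $\mathcal{R}_\ell$, so contractibility follows at once.

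There is, however, one concrete error. You justify $\sigma(P)\in\Omega^+_{n,\ell}$ by calling it ``a product of factors with positive coefficients'', but $x^2+1$ has a vanishing middle coefficient, so the justification is false; worse, for $\ell=0$ (which occurs for every even $n$) the conclusion itself fails: $\sigma(P)=(x^2+1)^{n/2}$ has zero coefficients in all odd degrees, hence lies outside $\bR^+_n[x]$, and your homotopy exits $\Omega^+_{n,0}$ at $t=1$. (For $\ell\ge 1$ the conclusion happens to hold, but only after checking that every coefficient of the product receives at least one strictly positive contribution.) The repair is immediate: replace $(x^2+1)^k$ by $(x^2+x+1)^k$, exactly the factor the paper uses for its contraction target; idempotence of $\sigma$, positivity of the convex combination of everywhere-positive polynomials, and the homeomorphism with $\mathcal{R}_\ell$ all go through verbatim. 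You should also record that each $\Omega^+_{n,\ell}$ is non-empty (e.g. via $(x+1)^\ell(x^2+x+1)^k$), since otherwise the count of $\left[\frac{n}{2}\right]+1$ components is not established.
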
 

\begin{proof} Observe that $\Omega^+_n:=\bR^+_n[x]\setminus \D^+_n$ is an open set and therefore its connected components are open as well. Obviously the number of real and simple roots is constant within each such connected component. Let us show that the set $\Omega^+_{n, \ell}$ consisting of all monic degree $n$ polynomials with positive coefficients and exactly $\ell$ distinct real roots where $\ell \le n$ and $\ell\equiv n \mod 2$ is open, non-empty, and contractible. This fact will settle our lemma. 

To prove non-emptiness observe that the polynomial $$p_{n,\ell}(x)=(x+1)(x+2)\cdots (x+\ell)(x^2+1)^{\frac{n-\ell}{2}}$$ belongs to $\Omega^+_{n, \ell}$. To prove contractibility we will show that any compact subset of  $\Omega^+_{n, \ell}$ is contractible. (The fact that contractibility of any compact subset in an open subset  $S\subseteq \bR^n$ implies contractibility of  $S$ follows from Whitehead's theorem  together with Corollary IV.5.5 
of  \cite{LuWe}). To do this we first observe the contractibility of the subset $\widetilde{\Omega^+_{n, \ell}}\subset \Omega^+_{n, \ell}$ consisting of all polynomials of the form 
$$p(x)=(x+x_1)(x+x_2)\dots (x+x_\ell)(x^2+p_1x+q_1)\dots (x^2+p_\kappa x + q_\kappa)$$
where $\kappa=\frac{n-\ell}{2}$, $x_1>0, x_2>0,  \dots, x_\ell>0$,  $p_1>0, p_2>0,  \dots, p_\kappa>0$, and $q_1> \frac{p_1^2}{4}, \dots, q_\kappa > \frac {p_\kappa^2}{4}$. 

Indeed, we can contract $\widetilde{\Omega^+_{n, \ell}}$ to the polynomial $p_{n,\ell}(x)\in \widetilde{\Omega^+_{n, \ell}}$ given by 
$$p_{n,\ell}(x)=(x+1)^\ell(x^2+x+1)^\kappa. 
$$
To do this we (linearly) deform each factor $u_i(x)=(x+x_i)$ into $(x+1)$ by using the family $u_i(x,t)={t(x+1)+(1-t)u_i(x)},\; t\in [0,1].$ Analogously, we can linearly deform each factor 
$v_i(x)=x^2+p_i x+q_i$ into $x^2+x+1$ by using the family $v_i(x,t)={t(x^2+x+1)+(1-t)v_i(x)},\; t\in [0,1].$ One can easily check that the latter deformation preserves the condition $p>0$ and $q>\frac{p^2}{4}$. 

Now given any compact subset of $S\subset \Omega^+_{n, \ell}$, we can move it within  $\Omega^+_{n, \ell}$ into $\widetilde{\Omega^+_{n, \ell}}$ where it can be contracted to $p_{n,\ell}(x)$. Indeed, for any polynomial $p(x)$ consider its deformation $p_t(x)=p(x+t)$ where $t\in [0,+\infty)$.  One can check that for any $p(x)\in \Omega^+_{n, \ell}$, there exists $t_p$ such that for all $t>t_p$ the polynomial $p_t(x)$ belongs to $\widetilde{\Omega^+_{n, \ell}}$. One can easily see that $t_p$ depends continuously on $p$. Thus we can move any compact set $S\subset \Omega^+_{n, \ell}$ in  $\widetilde{\Omega^+_{n, \ell}}$  and contract it. This fact along with a number of similar statements can be found in \cite{Ko}.
\end{proof} 

\begin{corollary}\label{cor:compl} {\rm The complement  $\widehat\Omega^+_n:=\widehat \bR^+_n[x]\setminus \widehat\D^+_n$ is the union of  of $\left[\frac{n}{2}\right]+1$ open contractible components  $\widehat \Omega^+_{n, \ell},\; 0\le \ell\le n,\; \ell\equiv n \mod 2$, where  $\widehat \Omega^+_{n, \ell}$ is the set of  all monic degree $n$ polynomials with positive coefficients and constant term $1$ having $\ell$ simple real (negative) roots. } 
\end{corollary}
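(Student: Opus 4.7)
The plan is to reduce Corollary~\ref{cor:compl} to the already-proved Lemma~\ref{lm:compl} by exhibiting a diffeomorphism between $\bR^+_n[x]$ and $\widehat\bR^+_n[x]\times\bR_{>0}$ that preserves the real-root structure. The point is that a global dilation of the variable canonically normalizes any polynomial in $\bR^+_n[x]$ so that its constant term equals $1$, without altering multiplicities of real roots.

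Concretely, I would define the rescaling map $\Phi\colon \bR^+_n[x]\to \widehat\bR^+_n[x]\times \bR_{>0}$ by sending $P(x)=x^n+a_{n-1}x^{n-1}+\cdots+a_0$ to the pair $(\hat P,\lambda)$, where $\lambda=a_0^{1/n}$ and $\hat P(x)=\lambda^{-n}P(\lambda x)$. Since $a_0>0$ on $\bR^+_n[x]$, the number $\lambda$ is well defined and positive; the polynomial $\hat P$ is monic with positive coefficients $a_j\lambda^{j-n}$ and constant term $1$, so it lies in $\widehat\bR^+_n[x]$. The inverse map $(\hat P,\lambda)\mapsto \lambda^n\hat P(x/\lambda)$ is also smooth, so $\Phi$ is a diffeomorphism. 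Because $\Phi$ corresponds to the substitution $x\mapsto\lambda x$, the real roots of $\hat P$ are exactly the real roots of $P$ divided by $\lambda$, preserving both their count and their multiplicities; in particular $\Phi$ sends $\D^+_n$ onto $\widehat\D^+_n\times\bR_{>0}$ and, for each admissible $\ell$, restricts to a diffeomorphism $\Omega^+_{n,\ell}\cong \widehat\Omega^+_{n,\ell}\times\bR_{>0}$.

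Since $\bR_{>0}$ is contractible, $\widehat\Omega^+_{n,\ell}$ is open, non-empty and contractible if and only if $\Omega^+_{n,\ell}$ is; Lemma~\ref{lm:compl} supplies exactly these properties, yielding the desired decomposition of $\widehat\Omega^+_n$ into $\left[\frac{n}{2}\right]+1$ components labeled by $\ell\equiv n \mod 2$. The only routine check is that the rescaling preserves positivity of every coefficient, which is immediate because $\lambda>0$, so I do not anticipate a real obstacle. As an alternative, one could repeat the contraction argument of Lemma~\ref{lm:compl} verbatim inside $\widehat\bR^+_n[x]$, noting that the deformations of the linear and quadratic factors used there can be renormalized at each time step so that the product always has constant term $1$; but invoking the diffeomorphism $\Phi$ is more economical.
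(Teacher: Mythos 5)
Your proof is correct and is exactly the paper's intended argument: the paper's entire proof reads ``Use the quasihomogeneous action of $\bR^+$ on $\bR_n[x]$,'' and your map $P(x)\mapsto\bigl(\lambda^{-n}P(\lambda x),\lambda\bigr)$ with $\lambda=a_0^{1/n}$ is precisely that action, spelled out. Nothing further is needed.
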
 

\begin{proof} Use the quasihomogeneous action of $\bR^+$ on $\bR_n[x]$. 
\end{proof} 

\begin{remark} Observe that any real polynomial with positive coefficients has only negative real roots. 
\end{remark}

\begin{lemma}\label{lm:simp} {\rm (i) In the above notation, the map $\text{Log} |\cdot |$ sends $\widehat \bR^+_{\bar \si, \bar \epsilon}$ to the affine cone given by affine  inequalities 
$$2\al_j-\al_{j-1}-\al_{j+1}\lessgtr_{\bar \si} \log \epsilon_j,\; j=1, \dots, n-1,$$
where the inequality sign of the $j$-th inequality is given by $\si_j$. (Here $\al_0=\al_{n}=0$).

\noindent
(ii) the affine cone $\text{Log} \left(\widehat \bR^+_{\bar \si, \bar \epsilon}\right)$ is an (affine) orthant in $\bR^{n-1}$ in appropriate coordinates;

\noindent
 (iii) For two $(n-1)$-tuples $\bar \epsilon^{(1)}$ and $\bar \epsilon^{(2)}$ of positive numbers, the set  $\widehat \bR^+_{\bar \si, \bar \epsilon^{(1)}}$ is contained in the set $\widehat \bR^+_{\bar \si, \bar \epsilon^{(2)}}$ if and only if $\bar \epsilon^{(1)} \lessgtr_{\bar \sigma} \bar \epsilon^{(2)}$, which means that $\epsilon_j^{(1)} \lessgtr_{\bar \sigma} \epsilon_j^{(2)},\; j=1,\dots, n-1$ and the sign of the inequality is determined by $\si_j$. }
\end{lemma}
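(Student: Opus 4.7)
\medskip

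\noindent\textbf{Proof proposal for Lemma~\ref{lm:simp}.} The plan for (i) is a direct logarithmic calculation. Since $P(x)=x^n+a_{n-1}x^{n-1}+\dots+a_1x+1\in\widehat{\bR}_n^+[x]$, we have $a_0=a_n=1$, so after setting $\alpha_k=\log a_k$ (with the convention $\alpha_0=\alpha_n=0$) the defining inequality $q_j\lessgtr_{\bar\sigma}\epsilon_j$, i.e.\ $a_j^2/(a_{j-1}a_{j+1})\lessgtr_{\bar\sigma}\epsilon_j$, transforms, upon taking logarithms (monotone on $\bR_{>0}$), into exactly $2\alpha_j-\alpha_{j-1}-\alpha_{j+1}\lessgtr_{\bar\sigma}\log\epsilon_j$ for $j=1,\dots,n-1$. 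The fact that $\mathrm{Log}|\cdot|$ maps $\widehat{\bR}_n^+[x]$ diffeomorphically onto $\bR^{n-1}$ is already noted in the paper.

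For (ii) the key observation is that the linear map $L:\bR^{n-1}\to\bR^{n-1}$ defined by
\[
L(\alpha_1,\dots,\alpha_{n-1})=\bigl(2\alpha_1-\alpha_2,\;2\alpha_2-\alpha_1-\alpha_3,\;\dots,\;2\alpha_{n-1}-\alpha_{n-2}\bigr)
\]
is a linear isomorphism. Its matrix is the $(n-1)\times(n-1)$ tridiagonal matrix with $2$'s on the diagonal and $-1$'s immediately above and below; this is the discrete Dirichlet Laplacian, and its determinant equals $n\neq 0$ (equivalently, its eigenvalues $4\sin^2(k\pi/2n)$, $k=1,\dots,n-1$, are strictly positive). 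Hence, setting $\beta_j:=2\alpha_j-\alpha_{j-1}-\alpha_{j+1}$ defines a new system of affine coordinates on $\bR^{n-1}$, and in these coordinates $\mathrm{Log}(\widehat{\bR}^+_{\bar\sigma,\bar\epsilon})$ becomes the product
\[
\prod_{j=1}^{n-1}\bigl\{\beta_j\lessgtr_{\sigma_j}\log\epsilon_j\bigr\},
\]
which is a (translated) orthant in $\bR^{n-1}$.

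For (iii) I will argue via the same coordinate change. The sufficiency is immediate: if $\epsilon^{(1)}_j\lessgtr_{\sigma_j}\epsilon^{(2)}_j$ for every $j$, then $\log\epsilon^{(1)}_j\lessgtr_{\sigma_j}\log\epsilon^{(2)}_j$, so each factor of the orthant describing $\widehat{\bR}^+_{\bar\sigma,\bar\epsilon^{(1)}}$ is contained in the corresponding factor for $\widehat{\bR}^+_{\bar\sigma,\bar\epsilon^{(2)}}$. For necessity, suppose $\widehat{\bR}^+_{\bar\sigma,\bar\epsilon^{(1)}}\subset\widehat{\bR}^+_{\bar\sigma,\bar\epsilon^{(2)}}$ but that the inequality $\epsilon^{(1)}_{j_0}\lessgtr_{\sigma_{j_0}}\epsilon^{(2)}_{j_0}$ fails for some index $j_0$. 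Using the $\beta$-coordinates, one can choose $\beta_{j_0}$ strictly between $\log\epsilon^{(2)}_{j_0}$ and $\log\epsilon^{(1)}_{j_0}$ (on the side specified by $\sigma_{j_0}$) and pick each remaining $\beta_j$ to strictly satisfy the first orthant's inequality; this produces a point of $\widehat{\bR}^+_{\bar\sigma,\bar\epsilon^{(1)}}$ not lying in $\widehat{\bR}^+_{\bar\sigma,\bar\epsilon^{(2)}}$, a contradiction.

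I do not anticipate a real obstacle: the whole statement is essentially a repackaging of the definition after taking logarithms. The only nontrivial ingredient is the nondegeneracy of the discrete Laplacian matrix needed in (ii), which is standard.
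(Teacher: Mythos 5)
Your proposal is correct and follows essentially the same route as the paper: (i) by direct logarithmization, (ii) by introducing the coordinates $\kappa_j=2\al_j-\al_{j-1}-\al_{j+1}$ and noting that the tridiagonal matrix has determinant $n$, and (iii) by comparing shifted coordinate orthants (which the paper leaves as immediate and you merely spell out).
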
 

\begin{proof} Item \rm {(i)} follows immediately by taking the logarithm of the inequalities defining  $\widehat \bR^+_{\bar \si, \bar \epsilon}$. 

To settle \rm {(ii)}, introduce $\kappa_j:=\log q_j=2\al_j-\al_{j-1}-\al_{j+1},\; j=1,\dots, n-1.$ We show that $(\kappa_1,\dots, \kappa_{n-1})$ is a coordinate system in $\bR^{n-1}$. Indeed, one has the following relation 
$$\begin{pmatrix}\kappa_1\\ \kappa_2\\\kappa_3\\ \vdots\\ \kappa_{n-1} \end{pmatrix}=\begin{pmatrix}2&-1&0&0&\dots &0\\ -1&2&-1&0&\dots&0\\  0&-1&2&-1&\dots&0 \\
\vdots&\vdots&\vdots&\vdots&\ddots&\vdots \\ 0&0&\dots&0&-1&2   \end{pmatrix}   \begin{pmatrix}\alpha_1\\ \alpha_2 \\ \alpha_3\\\vdots\\ \alpha_{n-1} \end{pmatrix}.
$$
The determinant of the $(n-1)\times (n-1)$-matrix in the right-hand side of the latter equation equals $n$ which can be easily proved by induction which implies item {(ii)} since $\text{Log} \left(\widehat \bR^+_{\bar \si, \bar \epsilon}\right)$ is given by the system of inequalities $\kappa_j\lessgtr_{\bar \si} \log \epsilon_j,\; j=1,\dots, n-1$.  

Item \rm{(iii)} follows immediately from comparison of shifted coordinate orthants. 
\end{proof}

The following lemma is straightforward.\\

\noindent
\begin{lemma}\label{lem:1}  {\rm Let $P(x)=\sum_{k=0}^{n} a_k x^k  \in \mathbb{R}[x].$
\begin{enumerate}
\item If $\mathfrak {P}(x)=a P(bx)$ for some non-zero real values of $a$  and $b,$ then
\begin{equation}
\label{a6}
q_j(\mathfrak{P})=q_j(P), \ \ j=1,2, \ldots, n-1
\end{equation}
\item If $\widetilde{P}(x)=x^n P\left(\frac{1}{x}\right),$ then
\begin{equation}
\label{a7}
q_j(\widetilde {P})=q_{n-j}(P), \ \ j=1,2, \ldots, n-1. 
\end{equation}
\end{enumerate}
}
\end{lemma}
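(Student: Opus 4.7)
The plan is to prove both parts by direct computation, reading off how the coefficients transform under the two operations and substituting into the definition $q_j = a_j^2/(a_{j-1}a_{j+1})$. Since the statement is algebraic and does not require any estimates or geometric arguments, there is no real obstacle; the only thing to watch is whether the $a$ and $b$ factors in part (1) cancel cleanly and whether the index reversal in part (2) respects the allowed range $1 \le j \le n-1$.

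For part (1), I would expand $\mathfrak{P}(x) = a P(bx) = \sum_{k=0}^n (a\,b^k a_k) x^k$, so that the new coefficients are $\widehat{a}_k = a\,b^k a_k$. Plugging into the definition of $q_j$, the factor $a^2$ in the numerator matches $a \cdot a$ in the denominator, and the powers of $b$ contribute $b^{2j}$ in the numerator and $b^{j-1} \cdot b^{j+1} = b^{2j}$ in the denominator, so everything cancels and $q_j(\mathfrak{P}) = q_j(P)$. The only minor care needed is to note that $a \ne 0$ and $b \ne 0$ ensure no division by zero arises, and that the $q_j$ are defined in terms of ratios so they make sense even if the transformation sends positive coefficients to signed ones (the identity is an equality of rational expressions).

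For part (2), I would write $\widetilde P(x) = x^n \sum_{k=0}^n a_k x^{-k} = \sum_{k=0}^n a_k x^{n-k}$, and relabel $m = n-k$ to read off $\widetilde P(x) = \sum_{m=0}^n a_{n-m} x^m$, so that the coefficient of $x^m$ in $\widetilde P$ is $a_{n-m}$. Substituting into the definition gives
$$q_j(\widetilde P) = \frac{a_{n-j}^2}{a_{n-j+1}\, a_{n-j-1}} = \frac{a_{n-j}^2}{a_{(n-j)-1}\, a_{(n-j)+1}} = q_{n-j}(P),$$
which is the desired identity. Note that as $j$ runs over $1, \dots, n-1$, so does $n-j$, so the range matches perfectly and no boundary case is lost.

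In short, both identities are one-line verifications once the coefficients are written out, and the lemma serves as a dictionary lemma for later arguments (allowing normalizations like $a_0 = a_n = 1$ and the symmetry $q_j \leftrightarrow q_{n-j}$). No genuine obstacle arises; the statement is included mainly to pin down the invariance properties of the $q$-sequence that will be used implicitly throughout the rest of the paper.
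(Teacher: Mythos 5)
Your proof is correct and is exactly the direct coefficient computation the paper has in mind; the paper simply declares the lemma "straightforward" and omits the verification entirely. Both cancellations (the $a^2b^{2j}$ factors in part (1) and the index reversal $m=n-k$ in part (2)) are carried out correctly, so nothing is missing.
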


\section{Main  results}

\subsection{Previously known facts}

In \cite{KV} the first and the third authors proved the following statements.\\

\noindent
{\bf Theorem A.} For $P(x)=\sum_{k=0}^{2m} a_k x^k  \in \mathbb{R}_{+}[x],$ if the inequalities 

\begin{equation}
\label{a2}
q_{2k+1}<\frac{1}{\cos^2 \left( \frac{\pi}{m+2} \right)}
\end{equation}
hold for all $k=0,1,\ldots , m,$ then $P(x)>0$ for each real value of $x,$  i.e. $P(x)\in \OO_{<2}^+$.\\

\noindent
{\bf Theorem B.} For $P(x)=\sum_{k=0}^{2m+1} a_k x^k  \in \mathbb{R}_{+}[x],$ if the inequalities 

\begin{equation}
\label{a3}
q_{2k}<\frac{4k^2-1}{4k^2} \frac{1}{\cos^2 \left( \frac{\pi}{m+2} \right)}
\end{equation}
hold for all $k=1,2,\ldots , m,$ then $P(x)$ has exactly one real (negative) root (counting its multiplicity), i.e. $P(x)\in \OO_{<3}^+$. \\

\noindent
{\bf Theorem C.} The constants $\frac{1}{\cos^2 \left( \frac{\pi}{m+2} \right)}$ in Theorem A and $\frac{4k^2-1}{4k^2} \frac{1}{\cos^2 \left( \frac{\pi}{m+2} \right)}$ in Theorem B are sharp for every $m\in \mathbb{N}.$\

\medskip
\noindent 
\begin{corollary}

\smallskip {\rm 
The assumptions of Theorems A and B can be interpreted as the fact that the logarithmic image of the polynomials under consideration belongs to the polyhedral cone given as follows: 

\noindent
In case of Theorem A: \quad $2\al_{2k+1}-\al_{2k}-\al_{2k+2}<-2\ln (\cos \left(\frac{\pi}{m+2}\right))$, $k=0,1,\ldots , m,$.

\noindent
In case of Theorem B: \quad $2\al_{2k}-\al_{2k-1}-\al_{2k+1}<-2\ln (\cos \left(\frac{\pi}{m+2}\right))+\ln (\frac{4k^2-1}{4k^2})$, $k=1,2, \ldots , m$. 

\medskip
Theorem C guarantees that there exist small deformations of the above polyhedral cones which are not contained in the logarithmic image of 
$ \OO_{<2}^+$ and  $\OO_{<3}^+$ respectively. 
}
\end{corollary}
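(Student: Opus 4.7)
The plan is essentially a direct translation argument: the corollary is a reformulation of Theorems A, B, C via the $\text{Log}|\cdot|$ coordinates introduced earlier, so the main task is bookkeeping rather than new mathematics.

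First I would unwind the definition $q_j(P) = a_j^2/(a_{j-1}a_{j+1})$ and take the logarithm of both sides. Writing $\alpha_k = \log a_k$ (with the convention $\alpha_0 = \alpha_n = 0$ in the $\widehat{\bR}^+_n[x]$ setting coming from the monic/constant-term-$1$ normalization), one obtains the identity $\log q_j = 2\alpha_j - \alpha_{j-1} - \alpha_{j+1}$ already recorded in Lemma~\ref{lm:simp}(i). Applying this to each of the strict inequalities in Theorem A, namely $q_{2k+1} < 1/\cos^2(\pi/(m+2))$ for $k=0,1,\ldots,m$, transforms the hypothesis into the polyhedral system
\[
2\alpha_{2k+1}-\alpha_{2k}-\alpha_{2k+2} < -2\ln\!\bigl(\cos(\pi/(m+2))\bigr),
\]
which is precisely the first family of inequalities claimed. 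The same computation applied to Theorem B, where the right-hand side is $(4k^2-1)/(4k^2) \cdot 1/\cos^2(\pi/(m+2))$, yields the second family upon splitting the logarithm of the product into a sum.

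Next I would observe that by Lemma~\ref{lm:simp}(ii) these affine inequalities cut out an (affine) orthant in $\bR^{n-1}$, so the resulting set is indeed a (closed/open) polyhedral cone, and by the proof of Theorem A (respectively Theorem B) its $\text{Log}$-preimage is contained in $\OO^+_{<2}$ (respectively $\OO^+_{<3}$). This is precisely the statement that the logarithmic image of the polynomials satisfying the hypotheses lies inside the claimed polyhedral cone.

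For the final assertion, I would quote Theorem C directly. Sharpness of the constants means that replacing $1/\cos^2(\pi/(m+2))$ by any strictly larger value $1/\cos^2(\pi/(m+2)) + \delta$ (and similarly for Theorem B) allows one to produce a polynomial $P \in \bR^+[x]$ satisfying the perturbed inequalities yet possessing a real root (respectively, more than one real root counted with multiplicity). Translating this $P$ back through $\text{Log}|\cdot|$ gives a point in the slightly enlarged polyhedral cone whose preimage lies outside $\OO^+_{<2}$ (respectively $\OO^+_{<3}$), which is exactly the non-containment assertion of the corollary.

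The whole argument is essentially a change of variables; there is no substantive obstacle. The only mild subtlety is to make sure the normalization of the $\alpha_j$ matches the setting (monic with constant term $1$) so that the boundary convention $\alpha_0 = \alpha_n = 0$ is legitimate when interpreting the inequalities for $k=0$ and $k=m$; this is automatic from the reduction recorded in Lemma~\ref{lem:1}, since scaling the variable and leading coefficient preserves each $q_j$.
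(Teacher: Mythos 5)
Your argument is correct and coincides with what the paper intends: the corollary is treated there as an immediate reformulation obtained by taking logarithms of the inequalities \eqref{a2} and \eqref{a3} (exactly as in the Remark following the Log map and in Lemma~\ref{lm:simp}), with the final assertion quoted directly from Theorem~C. Your added care about the normalization $\al_0=\al_n=0$ via the scaling invariance of the $q_j$ (Lemma~\ref{lem:1}) is a correct and harmless elaboration of the same route.
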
 

\smallskip

Theorems A and B can be generalized to the  case when some coefficients are allowed to be negative. Namely,  the following modifications of these results hold.\\

\noindent
{\bf Theorem D.}  {\rm  For $P(x)=\sum_{k=0}^{2m} a_k x^k  \in \mathbb{R}[x],$ assume that $a_{2k}>0, \ \ k=0, 1, \ldots, m.$ If the inequalities 

\begin{equation}
\label{a4}
q_{2k+1}<\frac{1}{\cos^2 \left( \frac{\pi}{m+2} \right)}
\end{equation}
hold for all $k=0,1,\ldots , m,$ then $P(x)>0$ for each real value of $x,$ i.e. $P(x)\in \OO_{<2}$.\\}

\noindent
{\bf Theorem E.} Given $P(x)=\sum_{k=0}^{2m+1} a_k x^k  \in \mathbb{R}[x],$ assume that $a_{2k+1}>0, \ \ k=0,1,\ldots , m.$ If the inequalities 

\begin{equation}
\label{a5}
q_{2k}<\frac{4k^2-1}{4k^2} \frac{1}{\cos^2 \left( \frac{\pi}{m+2} \right)}
\end{equation}
hold for all $k=1,2,\ldots , m,$ then $P(x)$ has exactly one real root (counting its multiplicity), i.e. $P(x)\in \OO_{<3}$. \\

The next result  follows from Theorem E and the second statement of Lemma~\ref{lem:1}.\\

\noindent
{\bf Theorem F.} Given $P(x)=\sum_{k=0}^{2m+1} a_k x^k  \in \mathbb{R}[x],$ assume that $a_{2k+1}>0, \ \ k=0,1,\ldots , m.$ If the inequalities 

\begin{equation}
\label{a8}
q_{2m+1-2k}<\frac{4k^2-1}{4k^2} \frac{1}{\cos^2 \left( \frac{\pi}{m+2} \right)}
\end{equation}
hold for all $k=1,2,\ldots , m,$ then $P(x)$ has exactly one real root (counting its multiplicity), i.e. $P(x)\in \OO_{<3}$. \\

\medskip
\subsection{New results}
We will use the following notation.\\

For each real polynomial $P(x)$, we will denote by $\sharp_r(P)$ the number of its real roots (counting their multiplicities).\\

\begin{theorem}\label{th:1}
  {\rm Assume that $P(x)=\sum_{k=0}^{n} a_k x^k \in \mathbb{R}^{+}_n[x], $  where $n\geq 4.$ If $\sharp_r(P)\geq n-2,$ then 
 \begin{equation}\label{bb1}
 q_1+q_{n-1}\ge 4\frac{n^2-3n}{(n-2)^2}.
 \end{equation}
In particular, 
\begin{equation}
\label{b1}
\mbox{either} \ \ q_1 \geq \frac{2n^2-6n}{(n-2)^2} \  \   \mbox{or}   \  \ q_{n-1} \geq\frac{2n^2-6n}{(n-2)^2}.
\end{equation}
}
\end{theorem}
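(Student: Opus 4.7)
The plan is to factor $P$ into its real-rooted part times a quadratic, apply Newton's inequality to the real-rooted factor, and combine the resulting bounds. Since every real root of $P \in \bR_n^+[x]$ is negative and the hypothesis $\sharp_r(P)\ge n-2$ leaves at most two roots non-real, either $P$ is fully real-rooted, or exactly two of its roots form a complex conjugate pair. In the former case Newton's inequality applied directly to $P$ gives $q_1(P),\,q_{n-1}(P)\ge \tfrac{2n}{n-1}$, hence
$$q_1(P)+q_{n-1}(P)\ge \frac{4n}{n-1}\ge \frac{4n(n-3)}{(n-2)^2},$$
the last step being the trivial $(n-2)^2\ge(n-1)(n-3)$. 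So the substantive case is the factorization $P(x)=(x^2+ax+b)\,Q(x)$ with $Q(x)=\prod_{i=1}^{n-2}(x+\gamma_i)=x^{n-2}+c_{n-3}x^{n-3}+\cdots+c_0$ real-rooted ($\gamma_i>0$), and $a^2<4b$, $b>0$.

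Expanding the product gives $a_{n-1}=a+c_{n-3}$, $a_{n-2}=b+ac_{n-3}+c_{n-4}$, $a_0=bc_0$, $a_1=ac_0+bc_1$, $a_2=c_0+ac_1+bc_2$, so
$$q_{n-1}(P)=\frac{(a+c_{n-3})^2}{b+ac_{n-3}+c_{n-4}},\qquad q_1(P)=\frac{(ac_0+bc_1)^2}{bc_0(c_0+ac_1+bc_2)}.$$

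The key input is Newton's inequality (Theorem N) applied to the real-rooted polynomial $Q$ of degree $n-2$ at the two extreme indices $k=1$ and $k=n-3$, which yields the upper bounds
$$c_{n-4}\le \tfrac{n-3}{2(n-2)}\,c_{n-3}^2,\qquad c_2\le \tfrac{n-3}{2(n-2)}\,\tfrac{c_1^2}{c_0}.$$
Substituting these into the denominators above produces explicit lower bounds for $q_{n-1}(P)$ and $q_1(P)$ depending only on $a$, $b$, and the three parameters $c_{n-3},c_0,c_1$.

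The main obstacle is then the resulting multi-variable optimization. I would first use the scaling invariance of Lemma~\ref{lem:1}(1) to normalize $c_0=1$, and then invoke the reversal symmetry of Lemma~\ref{lem:1}(2), under which $q_1(P)\leftrightarrow q_{n-1}(P)$ and the quadratic parameters transform via $(a,b)\mapsto(a/b,1/b)$, with $b=1$ as the fixed locus. This should cut the effective dimension of the minimization substantially. The anticipated extremal polynomial is $(x^2+1)(x+1)^{n-2}$ — at which Newton's inequality for $Q=(x+1)^{n-2}$ is tight and one computes $q_1+q_{n-1}=4(n-2)^2/(n^2-5n+8)$ — and verifying that $4(n^2-3n)/(n-2)^2$ is indeed the infimum should then reduce to an elementary single-variable inequality provable by AM-GM or Cauchy-Schwarz. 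The weaker "either/or" bound in \eqref{b1} follows immediately from \eqref{bb1} by pigeonholing.
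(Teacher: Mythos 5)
Your setup (split off the possibly irreducible quadratic, exploit real-rootedness of the degree-$(n-2)$ factor, handle the fully real-rooted case by Newton plus $(n-2)^2\ge (n-1)(n-3)$) is sound as far as it goes, and the reduction of \eqref{b1} to \eqref{bb1} is fine. But the proposal stops exactly where the real work begins: after substituting the two Newton bounds you are left with a coupled minimization over $a,b,c_{n-3},c_1$ (with $c_0$ normalized), since $a$ and $b$ enter both $q_1$ and $q_{n-1}$, and you offer no mechanism for carrying it out beyond the assertion that it ``should reduce to an elementary single-variable inequality.'' Concrete evidence that this assertion is unsafe: your anticipated minimizer $(x^2+1)(x+1)^{n-2}$ is not extremal. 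The sharp example (Theorem~\ref{th:2} of the paper) is $(x+1)^{n-2}\bigl(x^2+\tfrac{n-4}{n-2}x+1\bigr)$, with $q_1+q_{n-1}=\tfrac{4(n^2-3n)}{(n-2)^2}$, whereas $(x^2+1)(x+1)^{n-2}$ gives $\tfrac{4(n-2)^2}{n^2-5n+8}$, which exceeds the true infimum by a factor controlled by $(n-4)^2$ and agrees only at $n=4$. Note that both candidates are fixed by the reversal symmetry you invoke, so that symmetry cannot detect the nonzero linear term $\tfrac{n-4}{n-2}$ in the extremal quadratic; the minimum genuinely lives in the interior of the parameter you are not optimizing over.

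The paper's proof resolves precisely this difficulty by a different device. After normalizing the constant term to $1$ (so that the constant term of the irreducible quadratic is forced to be the reciprocal of the product of the other constant terms), the expressions for $q_1$ and $q_{n-1}$ are rewritten as two quadratic equations in the linear coefficient $t$ of the irreducible factor; requiring these quadratics to have a real root $t$ (nonnegative discriminants) eliminates $t$ and yields lower bounds \eqref{a24} and \eqref{a27} for $q_{n-1}$ and $q_1$ separately in terms of symmetric functions of the real-rooted part. The termwise conditions $a_j^2\ge b_j$ together with Lemma~\ref{lem:2} then reduce everything to $B_1+B_2\ge 2$, which is exactly where the reciprocal normalization pays off via $x+\tfrac1x\ge2$. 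Without an analogue of this elimination of the quadratic's middle coefficient (or some other way to certify the global minimum over $a,b$), your argument does not close.
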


The polyhedral domain given by inequalities~\eqref{b1} contains $\OO_{\ge n-2}^+$. Below we will split Theorem 5 into Theorems~\ref{th:3} and \ref{th:4} covering  the cases of even and odd degrees respectively. 

\medskip
As an immediate consequence of Theorem~\ref{th:1} we get the following 
corollary. 

\begin{corollary}\label{cor:1}
{\rm  If 
\begin{equation}
\label{c1}
\max(q_1,q_{n-1}) < \frac{2n^2-6n}{(n-2)^2} ,
\end{equation}\\
 then $$\sharp_r(P)\leq n-4.$$
 }
 \end{corollary}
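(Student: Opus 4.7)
The plan is to prove Corollary~\ref{cor:1} as the contrapositive of the ``in particular'' clause~\eqref{b1} of Theorem~\ref{th:1}. The first ingredient is a parity observation: since the non-real roots of a real polynomial come in complex conjugate pairs, $n-\sharp_r(P)$ is always a non-negative \emph{even} integer. Consequently, no $P\in\bR_n^+[x]$ of degree $n$ can have $\sharp_r(P)=n-3$, so the inequality $\sharp_r(P)>n-4$ is equivalent to $\sharp_r(P)\ge n-2$.

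Now I would assume $\max(q_1,q_{n-1})<\dfrac{2n^2-6n}{(n-2)^2}$ and argue by contradiction. If $\sharp_r(P)>n-4$, the parity remark forces $\sharp_r(P)\ge n-2$, so Theorem~\ref{th:1}, in the form~\eqref{b1}, applies and yields
\[
q_1\;\ge\;\frac{2n^2-6n}{(n-2)^2}\qquad\text{or}\qquad q_{n-1}\;\ge\;\frac{2n^2-6n}{(n-2)^2}.
\]
Either alternative contradicts the hypothesis $\max(q_1,q_{n-1})<\dfrac{2n^2-6n}{(n-2)^2}$. Hence $\sharp_r(P)\le n-4$, which is exactly the claim of the corollary.

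Since the corollary is a straight repackaging of the theorem, no new obstacle arises at this step: all of the analytic content is absorbed into Theorem~\ref{th:1}, and the corollary reduces to the parity remark followed by a contrapositive. The real work (and the only thing I would expect to be hard) lies in Theorem~\ref{th:1} itself, whose lower bound on $q_1+q_{n-1}$ is the substantive ingredient.
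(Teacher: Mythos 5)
Your argument is correct and coincides with the paper's: the corollary is stated there as an immediate consequence of Theorem~\ref{th:1}, obtained exactly by the contrapositive of \eqref{b1} together with the parity observation that $n-\sharp_r(P)$ is even, so $\sharp_r(P)>n-4$ forces $\sharp_r(P)\ge n-2$. Nothing further is needed.
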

 
 The polyhedral domain given by these inequalities is contained in $\OO_{< n-2}^+$.

\begin{corollary}\label{cor:2}
{\rm 
 Assume that $P_n(x)=\sum_{k=0}^{n} a_k x^k\in \mathbb{R}_{+}[x], $  where 
 $n\geq 4.$ If for some  $ m=2, 3, \ldots, n-2$ and for some $j=1,2,\ldots, n-m-1,$  
 the following two estimations are valid 
\begin{equation}
\label{b2}
q_j< \frac{(m-1)(m+1)}{m^2} \cdot \frac{n-j+1}{n-j} \cdot \frac{j+1}{j},
\end{equation}
  and 
\begin{equation}
\label{b3}
 q_{m+j}< \frac{(m-1)(m+1)}{m^2} \cdot \frac{n-m-j+1}{n-m-j} \cdot \frac{m+j+1}{m+j},
\end{equation}
 then

 $$\sharp_r(P_n)\leq n-4.$$\\
 }
 \end{corollary}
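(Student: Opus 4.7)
The plan is to reduce the degree-$n$ hypothesis to a degree-$(m+2)$ situation to which Theorem~\ref{th:1} applies. I consider the polynomial $Q$ of degree $m+2$ built from $P_n$ by the four-step composite: (a) differentiate $j-1$ times; (b) reverse, i.e., replace the polynomial by $x^{\deg P} P(1/x)$; (c) differentiate $r := n-j-m-1$ additional times (note $r \ge 0$ since $j \le n-m-1$); (d) reverse once more. Unwinding the composition, the coefficient of $x^k$ in $Q$ is a specific positive multiple of $a_{j-1+k}$ for $k=0,1,\ldots,m+2$. In particular $Q \in \mathbb{R}^+_{m+2}[x]$, and the two ratios $q_1(Q)$, $q_{m+1}(Q)$ become positive multiples of $q_j(P_n)$, $q_{m+j}(P_n)$ respectively.

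The construction is chosen so that real roots survive: Rolle's theorem costs at most one real root per differentiation, while reversal preserves the count because $a_0 > 0$ prevents a root at $0$. So, arguing by contradiction from $\sharp_r(P_n) \ge n-2$, the polynomial $Q$ retains at least $(n-2) - (n-m-2) = m = \deg Q - 2$ real roots, and Theorem~\ref{th:1} applies with its role of $n$ played by $m+2 \ge 4$, giving $q_1(Q) + q_{m+1}(Q) \ge 4(m+2)(m-1)/m^2$. A direct factorial calculation of the coefficients of $Q$ yields
\begin{equation*}
q_1(Q) = q_j(P_n) \cdot \tfrac{2j(n-j)(m+2)}{(j+1)(n-j+1)(m+1)}, \quad q_{m+1}(Q) = q_{m+j}(P_n) \cdot \tfrac{2(n-j-m)(m+j)(m+2)}{(n-j-m+1)(m+j+1)(m+1)};
\end{equation*}
substituting these into the bound above and dividing through by the common factor $2(m+2)/(m+1)$ rearranges it into
\begin{equation*}
q_j(P_n) \cdot \tfrac{j(n-j)}{(j+1)(n-j+1)} + q_{m+j}(P_n) \cdot \tfrac{(n-j-m)(m+j)}{(n-j-m+1)(m+j+1)} \ge \tfrac{2(m-1)(m+1)}{m^2}.
\end{equation*}

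The contrapositive now closes the argument: hypotheses \eqref{b2} and \eqref{b3} say precisely that each of the two summands on the left is strictly less than $(m-1)(m+1)/m^2$, so their sum is strictly less than $2(m-1)(m+1)/m^2$, contradicting the displayed inequality. This forces $\sharp_r(P_n) < n-2$, and since complex roots of a real polynomial pair up, $\sharp_r(P_n) \le n-4$. The main conceptual point is spotting the correct four-step transformation that isolates the coefficient window $a_{j-1},\ldots,a_{j+m+1}$ as a degree-$(m+2)$ polynomial while sacrificing only $n-m-2$ real roots; once this is in place, the remaining work is the routine factorial bookkeeping for $q_1(Q)$ and $q_{m+1}(Q)$, and the matching with the hypotheses is automatic.
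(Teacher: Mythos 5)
Your proposal is correct and follows essentially the same strategy as the paper: differentiate $j-1$ times, reverse, differentiate $n-m-j-1$ more times (with a further, purely cosmetic, reversal in your version), thereby producing a degree-$(m+2)$ polynomial whose coefficients are positive multiples of $a_{j-1},\dots,a_{j+m+1}$, and then invoke the degree-$(m+2)$ result together with Rolle's theorem and invariance of $\sharp_r$ under reversal. The only slight variation is that you argue by contradiction from the sum inequality in Theorem~\ref{th:1}, whereas the paper feeds the bound $\max(q_1(Q),q_{m+1}(Q))<2(m+2)(m-1)/m^2$ directly into Corollary~\ref{cor:1}; the two are immediately interchangeable here since hypotheses~\eqref{b2}--\eqref{b3} bound each term separately.
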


For the proof of Corollary~\ref{cor:2} see \S~\ref{sec:proofs}.

\begin{theorem}\label{th:2}
{\rm 
The estimate in Theorem~\ref{th:1} is sharp.\\
}
\end{theorem}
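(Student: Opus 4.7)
The plan is to exhibit an explicit polynomial that attains equality in~\eqref{bb1} and therefore shows the estimate cannot be improved. The search is guided by two symmetries recorded in Lemma~\ref{lem:1}: first, the quantities $q_1,q_{n-1}$ are invariant under rescalings $P(x)\mapsto aP(bx)$, so we may normalize the polynomial freely; second, the reciprocity $q_k(\widetilde P)=q_{n-k}(P)$ implies that palindromic polynomials automatically satisfy $q_1=q_{n-1}$, so the sum $q_1+q_{n-1}$ collapses to $2q_1$. This strongly suggests searching for an extremal polynomial inside the palindromic stratum of the boundary $\{\sharp_r(P)=n-2\}\subset\mathbb{R}^+_n[x]$, where the only nonreal roots form a single complex conjugate pair and all real roots coalesce into one multiple root.

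Concretely, I would try the one-parameter palindromic family
\[
P_b(x)=(x+1)^{n-2}(x^2+bx+1),\qquad b\in[0,2),
\]
in which the first factor contributes an $(n-2)$-fold real root at $-1$ and the second factor contributes a pair of non-real conjugate roots precisely when $b<2$, so that $P_b\in\mathbb{R}^+_n[x]$ and $\sharp_r(P_b)=n-2$ throughout the range. A direct expansion gives $a_0=1$, $a_1=b+(n-2)$ and $a_2=1+b(n-2)+\binom{n-2}{2}$, so that
\[
q_1(P_b)=\frac{(b+n-2)^2}{1+b(n-2)+(n-2)(n-3)/2}.
\]
The next step is to minimize $2q_1(P_b)$ on $[0,2)$; setting the derivative to zero yields a linear equation in $b$ whose unique solution is $b_*=(n-4)/(n-2)$, which lies in $[0,1)\subset[0,2)$ for every $n\ge 4$. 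At this value, the numerator simplifies to $n^2(n-3)^2/(n-2)^2$ and the denominator to $n(n-3)/2$, giving
\[
2q_1(P_{b_*})=\frac{4n(n-3)}{(n-2)^2},
\]
which matches the right-hand side of~\eqref{bb1} exactly.

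The argument is essentially a one-variable calculus problem, so there is no serious obstacle; the only substantive step is the initial choice of ansatz, and the palindromic form together with the requirement that $\sharp_r=n-2$ makes $(x+1)^{n-2}(x^2+bx+1)$ the natural candidate. A quick check in low degrees confirms the computation: for $n=4$ one has $b_*=0$, $P_{b_*}=x^4+2x^3+2x^2+2x+1$ and $q_1+q_3=4=\tfrac{4\cdot 4\cdot 1}{2^2}$, while for $n=5$ one has $b_*=\tfrac{1}{3}$ and $q_1+q_4=\tfrac{40}{9}=\tfrac{4\cdot 5\cdot 2}{3^2}$. By Lemma~\ref{lem:1}(1), the extremal polynomial is unique only up to the two-parameter rescaling $P(x)\mapsto aP(bx)$, so this construction actually produces a whole cone of extremals.
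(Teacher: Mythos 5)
Your proposal is correct and coincides with the paper's own proof: the extremal polynomial you arrive at, $(x+1)^{n-2}\bigl(x^2+\tfrac{n-4}{n-2}x+1\bigr)$, is exactly the example the paper exhibits, with the same value $q_1=q_{n-1}=\tfrac{2n(n-3)}{(n-2)^2}$ attaining equality in \eqref{bb1}. The only difference is that you derive the parameter $b_*=\tfrac{n-4}{n-2}$ by optimizing over the palindromic family rather than stating the example outright, which is a nice motivation but not a different argument.
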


{\it Proof.} The following example  
\begin{equation}
\label{a14}
P_{n}(x)=(x+1)^{n-2}\left(x^2+\frac{n-4}{n-2}x+1\right) 
\end{equation}
with $\sharp _r(P_n)=n-2$ and $q_1=q_{n-1}=\frac{2n(n-3)}{(n-2)^2}$  
settles  Theorem~\ref{th:2}. \qed \\

\begin{proposition}\label{pr:1}
{\rm 
 The constants in Corollary~\ref{cor:2} are sharp. }
 \end{proposition}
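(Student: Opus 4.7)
The plan is to construct, for each admissible triple $(n,m,j)$, an explicit polynomial $P^*_{n,m,j}\in\mathbb{R}^+_n[x]$ with $\sharp_r(P^*_{n,m,j})\ge n-2$ that achieves equality (or approaches it from above) in at least one of the bounds \eqref{b2}--\eqref{b3}. Since the hypothesis of Corollary~\ref{cor:2} requires both strict inequalities to hold, producing such a $P^*_{n,m,j}$ shows that neither constant can be tightened without breaking the implication.

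The first observation is that the extreme case $(m,j)=(n-2,1)$ is already settled by Theorem~\ref{th:2}: the polynomial $P_n(x)=(x+1)^{n-2}\bigl(x^2+\tfrac{n-4}{n-2}x+1\bigr)$ has $\sharp_r(P_n)=n-2$ and $q_1(P_n)=q_{n-1}(P_n)=\tfrac{2n(n-3)}{(n-2)^2}$; a direct calculation gives $\tfrac{(n-3)(n-1)}{(n-2)^2}\cdot\tfrac{2n}{n-1}=\tfrac{2n(n-3)}{(n-2)^2}$, which matches both bounds \eqref{b2} and \eqref{b3} in this case simultaneously.

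For general $(n,m,j)$ the natural candidate family is
\[
P^*_{n,m,j}(x;\alpha,\beta,p,q)=(x+1)^{j-1}(x+\alpha)^{m-1}(x+\beta)^{n-m-j}\,(x^2+px+q),
\]
with $\alpha,\beta,p,q>0$ and $p^2<4q$, so that the quadratic carries the unique complex conjugate pair and $\sharp_r=n-2$. The block exponents $j-1$, $m-1$, $n-m-j$ are tailored to the three segments of coefficients flanking positions $j$ and $m+j$. By Lemma~\ref{lem:1}(1) one may rescale to eliminate a redundant parameter, leaving three effective degrees of freedom against the two target equations $q_j=\text{RHS of \eqref{b2}}$ and $q_{m+j}=\text{RHS of \eqref{b3}}$.

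The concrete steps would be: (i) expand $P^*_{n,m,j}$ and extract $q_j$ and $q_{m+j}$ in closed form in $(\alpha,\beta,p,q)$; (ii) solve the resulting two-equation polynomial system, by a continuity/deformation argument starting from the Newton extremizer $(x+1)^n$ (where every $q_k=\tfrac{(k+1)(n-k+1)}{k(n-k)}$) and perturbing a pair of merged real roots into a complex conjugate pair while tracking the simultaneous decrease of $q_j$ and $q_{m+j}$; (iii) verify the chosen parameters lie in the positive region with $p^2<4q$; (iv) invoke Lemma~\ref{lem:1}(2) to deduce the symmetric dual case from the reversal $\widetilde P$. The main obstacle is explicit algebraic control of the two-equation system and guaranteeing that a solution stays inside the physical region for every admissible $(n,m,j)$; should the family above prove insufficient in borderline cases, it must be enlarged by splitting the $(x+\alpha)^{m-1}$ block into distinct factors, followed by a short case analysis.
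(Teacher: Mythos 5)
Your write-up is a plan rather than a proof, and the plan leaves exactly the hard step unresolved. To establish sharpness you must exhibit, for \emph{every} admissible $(n,m,j)$, a polynomial with at least $n-2$ real roots for which $q_j$ and $q_{m+j}$ simultaneously attain the right-hand sides of \eqref{b2} and \eqref{b3}. Your steps (i)--(iii) reduce this to solving a two-equation system in the parameters $(\alpha,\beta,p,q)$ inside the region $\{p^2<4q,\ \alpha,\beta>0\}$, but you give no argument that a solution exists there: the ``continuity/deformation from $(x+1)^n$'' heuristic only shows that $q_j$ and $q_{m+j}$ both \emph{move} as a real pair merges into a complex pair, not that the image of your parameter map contains the specific target point $\bigl(\text{RHS of \eqref{b2}},\text{RHS of \eqref{b3}}\bigr)$ for all $(n,m,j)$. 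You acknowledge this as ``the main obstacle'' and offer only to enlarge the family if it fails; that is precisely the content of the proposition, so as it stands the argument is circular at its core. (Your observation that the case $(m,j)=(n-2,1)$ follows from Theorem~\ref{th:2} is correct, but it covers only one triple.)

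The paper avoids the solvability question entirely by \emph{constructing} the extremal polynomial: it runs the proof of Corollary~\ref{cor:2} backwards. One first proves an integration lemma: if $f=(x+1)^m(ax^2+bx+c)$ has positive coefficients and irreducible quadratic factor, then the antiderivative $P$ of $f$ normalized by $P(-1)=0$ again has positive coefficients and the form $(x+1)^{m+1}(Ax^2+Bx+C)$ with $Ax^2+Bx+C$ irreducible. Starting from the degree-$(m+2)$ extremizer $(x+1)^m\bigl(x^2+\tfrac{m-2}{m}x+1\bigr)$ of Theorem~\ref{th:2}, one integrates $(n-m-j-1)$ times, reverses coefficients via $x^{n-j+1}P(1/x)$, and integrates $(j-1)$ more times, obtaining an explicit degree-$n$ polynomial $(x+1)^{n-2}(\tilde A x^2+\tilde B x+\tilde C)$ whose coefficients near positions $j$ and $m+j$ are known in closed form; a direct computation then shows $q_j$ and $q_{m+j}$ equal the bounds exactly. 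Note in particular that the single block $(x+1)^{n-2}$ suffices --- the extra parameters $\alpha,\beta$ in your ansatz are unnecessary once the quadratic factor is produced by this inverse-differentiation procedure. If you want to salvage your approach, you would need to replace the deformation heuristic by this explicit construction or by a genuine surjectivity argument for your parameter map.
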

 
For the proof of Proposition~\ref{pr:1} see \S~\ref{sec:proofs}.

\medskip
Theorem~\ref{th:1} is equivalent to the following 2 statements whose rather lengthy proofs  can be found in \S~\ref{sec:proofs}. 

\begin{theorem}\label{th:3} 
{\rm 
If 

\begin{equation}
\label{a10}
P_{2n}= \left(x^2+2tx+\frac{1}{b_1b_2\cdots b_{n-1}}\right) \prod_{j=1}^{n-1} (x^2+2a_jx+b_j),
\end{equation}
where $a_j>0, \ \  b_j>0, \ \ a_j^2 \geq b_j, \ \ j=1,2,\ldots, n-1,$ then 

\begin{equation} \label{aa1}
q_1+q_{2n-1} \ge 2\cdot\frac{2n^2-3n}{(n-1)^2}.
\end{equation}

In particular, 
\begin{equation}
\label{a11}
\mbox{either} \ \ q_1 \geq \frac{2n^2-3n}{(n-1)^2} \  \   \mbox{or}   \  \ q_{2n-1} \geq \frac{2n^2-3n}{(n-1)^2}. 
\end{equation}
}
\end{theorem}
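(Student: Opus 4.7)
The plan is to reduce the inequality $q_1+q_{2n-1} \ge L := 2n(2n-3)/(n-1)^2$ to a one-variable polynomial inequality by applying Newton's theorem to the real-rooted part.

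I would begin by factoring $Q(x) := \prod_{j=1}^{n-1}(x^2+2a_jx+b_j)=\prod_{i=1}^{2n-2}(x+r_i)$ with $r_i>0$ (the hypothesis $a_j^2\ge b_j$ is equivalent to each quadratic factor having real roots). With $u:=2t$, $e:=\prod r_i$, $S:=\sum r_i$, $S':=\sum 1/r_i$, $E:=e_2(r)$, $E':=e_2(1/r)$, a direct expansion of $P_{2n}=(x^2+ux+1/e)Q(x)$ gives $a_1=S'+ue$, $a_{2n-1}=u+S$, $a_2=E'+ueS'+e$, $a_{2n-2}=E+uS+1/e$. By Lemma~\ref{lem:1}(1) I would rescale $x \mapsto e^{1/(2n-2)}x$ to normalize $e=1$ without changing the $q_k$'s; Maclaurin's inequality then gives $S,S'\ge 2n-2$. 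Newton's inequality (Theorem~N) applied to $Q$ at $k=1$ and $k=2n-3$ produces the upper bounds $E'\le a(S')^2$ and $E\le aS^2$ with $a := (2n-3)/(4(n-1))$, hence
\[
q_1+q_{2n-1} \ge g(S')+g(S), \qquad g(x) := \frac{(x+u)^2}{ax^2+ux+1}.
\]
It therefore suffices to prove $g(x)\ge L/2$ for all $x\ge 2n-2$ and all admissible $u\in\bR$.

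Next I would rewrite $g(x)\ge L/2$ as $h(x):=2(x+u)^2-L(ax^2+ux+1)\ge 0$; substituting the values of $a$ and $L$ yields $h(x)=Ax^2+Bx+C$ with $A=(3n-4)/(2(n-1)^3)$, $B=-2(n-2)u/(n-1)^2$, $C=2(u^2(n-1)^2-n(2n-3))/(n-1)^2$. Since $A>0$ for $n\ge 2$, $h$ is a convex parabola. Two key identities would drive the argument:
\[
h(2n-2) = \frac{2((n-1)u-(n-2))^2}{(n-1)^2}, \qquad \frac{(n-1)^5(B^2-4AC)}{4} = n(2n-3)\bigl[(3n-4)-u^2(n-1)\bigr].
\]
The first is a perfect square; the second controls a short case split. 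If $u^2\ge (3n-4)/(n-1)$, the discriminant $B^2-4AC\le 0$, so $h\ge 0$ everywhere. Otherwise $u^2<(3n-4)/(n-1)$, and combined with the elementary inequality $(3n-4)(n-1)\ge (n-2)^2$ (equivalent to $n(2n-3)\ge 0$) this yields $|u|<(3n-4)/(n-2)$ for $n>2$ (trivially true for $n=2$), so the vertex $x_v=2(n-2)(n-1)u/(3n-4)$ of $h$ satisfies $x_v\le 2n-2$; by convexity $h(x)\ge h(2n-2)\ge 0$ for $x\ge 2n-2$. In both cases $g(x)\ge L/2$ on $[2n-2,\infty)$, so $g(S)+g(S')\ge L$, finishing the proof. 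Equality holds iff $u=(n-2)/(n-1)$, $S=S'=2n-2$ (forcing $r_i=1$ for all $i$), and Newton's inequality is saturated in $Q$---recovering the extremal polynomial of Theorem~\ref{th:2}.

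The main obstacle will be verifying the two identities above cleanly, together with the threshold comparison $(3n-4)(n-1)\ge (n-2)^2$ that glues the two cases of the discriminant analysis together; the algebra is elementary but tedious, which is presumably what the authors mean by calling the proof ``rather lengthy''.
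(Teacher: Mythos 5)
Your reduction contains a genuine gap at the normalization step, and the gap is fatal to the argument as structured. The family \eqref{a10} is \emph{rigid} under dilation: writing $P_{2n}=(x^2+ux+1/e)\prod_{i=1}^{2n-2}(x+r_i)$ with $e=\prod r_i$, the substitution $x\mapsto \lambda x$ (followed by renormalizing to monic) turns this into $(x^2+(u/\lambda)x+\frac{1}{e\lambda^2})\prod(x+r_i/\lambda)$ with new product $\tilde e=e\lambda^{-(2n-2)}$, and the constraint that the quadratic's constant term equal $1/\tilde e$ forces $\lambda^{2n}=1$. So you cannot arrange $e=1$ while keeping $a_0=1$ and the quadratic's constant term equal to $1$, which is exactly what your formulas $q_1=(S'+u)^2/(E'+uS'+1)$ and the bound $S'\ge 2n-2$ require. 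Without the normalization, Maclaurin only gives $S\ge (2n-2)e^{1/(2n-2)}$ and $S'\ge (2n-2)e^{-1/(2n-2)}$, i.e.\ the coupled constraint $SS'\ge(2n-2)^2$, and one of $S,S'$ may well be smaller than $2n-2$. The decoupled conclusion your argument would deliver --- that $q_1$ and $q_{2n-1}$ are \emph{each} at least $\frac{2n^2-3n}{(n-1)^2}$ --- is in fact false: for $n=2$ take $a_1=2$, $b_1=4$, $t=-0.1$, so that
$$P_4=(x^2-0.2x+0.25)(x+2)^2=x^4+3.8x^3+3.45x^2+0.2x+1$$
has all coefficients positive and satisfies the hypotheses, yet $q_1=\frac{(0.2)^2}{3.45}\approx 0.012$, far below the threshold $2$ (while $q_3\approx 4.19$, so the sum inequality \eqref{aa1} survives only because of the coupling). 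Here $e=4$ and $S'=1<2$, which is precisely where your $x\ge 2n-2$ hypothesis breaks.

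The computational skeleton you set up is sound --- the coefficient formulas, the two identities for $h(2n-2)$ and for $(n-1)^5(B^2-4AC)/4$, and the vertex/discriminant case split all check out --- but after the correct change of variables the problem becomes a genuinely two-variable minimization of $g_v(\sigma)+g_v(\tau)$ subject to $\sigma\tau\ge(2n-2)^2$ (with $\sigma=S'/\sqrt e$, $\tau=S\sqrt e$, $v=u\sqrt e$), not two independent one-variable problems. Handling that coupling between the low-degree and high-degree ends of the polynomial is the actual content of the theorem; it is what the paper's proof accomplishes at the very last step, where the two sides are tied together by an inequality of the form $B_1+B_2\ge 2$ with $B_1$, $B_2$ built from the $b_j$'s and their reciprocals and $x+1/x\ge 2$ applied termwise. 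A repaired version of your approach would need an analogous final step comparing $g_v(\sigma)+g_v(\tau)$ to $L$ on the hyperbola $\sigma\tau=(2n-2)^2$ rather than on the ray $\sigma=\tau\ge 2n-2$.
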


\begin{theorem}\label{th:4} 
{\rm 
If 

\begin{equation}
\label{a12}
P_{2n+1}=(x+c) \left(x^2+2tx+\frac{1}{b_1b_2\cdots b_{n-1}c}\right) \prod_{j=1}^{n-1} (x^2+2a_jx+b_j),
\end{equation}
where $c>0, \ \ a_j>0, \ \  b_j>0, \ \ a_j^2 \geq b_j, \ \ j=1,2,\ldots, n-1,$ then 

\begin{equation}\label{aa13}
q_1+q_{2n}\ge 8\cdot \frac{2n^2-n-1}{(2n-1)^2}.
\end{equation}

In particular, 
\begin{equation}
\label{a13}
\mbox{either} \ \ q_1 \geq  4\cdot \frac{2n^2-n-1}{(2n-1)^2} \  \   \mbox{or}   \  \ q_{2n} \geq 4 \cdot \frac{2n^2-n-1}{(2n-1)^2}.
\end{equation}
}
\end{theorem}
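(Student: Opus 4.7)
The plan is to decompose $P_{2n+1}(x) = R(x)\cdot T(x)$ where $R(x) = (x+c)\prod_{j=1}^{n-1}(x^2+2a_jx+b_j)$ is a monic polynomial of degree $2n-1$ whose $2n-1$ roots $-\alpha_1, \ldots, -\alpha_{2n-1}$ are real and negative, and $T(x) = x^2+2tx+S$ with $S := 1/R(0) = 1/(c\, b_1\cdots b_{n-1})$, chosen so that $P_{2n+1}(0) = 1$. This is the odd-degree analogue of the setup behind Theorem~\ref{th:3}, the extra ingredient being the linear factor $(x+c)$.

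First, I would express the key coefficients through power sums. Let $p_k = \sum \alpha_i^k$ and $\rho_k = \sum 1/\alpha_i^k$, the sums ranging over the $2n+1$ negated roots of $P_{2n+1}$. Since $a_0 = a_{2n+1} = 1$, the classical identities $a_1 = \rho_1$, $a_{2n} = p_1$, $2a_2 = \rho_1^2 - \rho_2$, $2a_{2n-1} = p_1^2 - p_2$ give
\[
q_1 = \frac{2\rho_1^2}{\rho_1^2-\rho_2}, \qquad q_{2n} = \frac{2p_1^2}{p_1^2-p_2}.
\]
Setting $u := 2t$, $X := \sum\alpha_i$, $Y := \sum\alpha_i^2$, $X' := \sum 1/\alpha_i$, $Y' := \sum 1/\alpha_i^2$, $N := \prod\alpha_i = 1/S$, and separating the contributions of the real roots of $R$ from the pair of roots of $T$, a short calculation shows that the $u^2$-terms in $\rho_1^2 - \rho_2$ and $p_1^2 - p_2$ cancel, leading to
\[
q_{2n}(u) = \frac{(X+u)^2}{\tfrac12(X^2-Y) + uX + S}, \qquad q_1(u) = \frac{(X'+uN)^2}{\tfrac12(X'^2-Y') + uX'N + N}.
\]

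Next, I would perform the joint minimization of $q_1(u) + q_{2n}(u)$ over $u \ge 0$ for fixed $\alpha$'s. Each summand is the ratio of a quadratic to a linear polynomial in $u$, so setting $d/du(q_1+q_{2n}) = 0$ reduces to a polynomial equation whose unique positive root $u^*$ yields the minimum in closed form. By Lemma~\ref{lem:1}(1), the problem is invariant under the simultaneous rescaling $\alpha_i \mapsto \lambda\alpha_i$, so one may normalize $N = \prod\alpha_i = 1$ (hence $S = 1$); by Lemma~\ref{lem:1}(2), it is further invariant under the involution $\alpha_i \mapsto 1/\alpha_i$ which swaps $(X,Y)$ with $(X',Y')$ and swaps $q_1 \leftrightarrow q_{2n}$. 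These symmetries single out $\alpha_1 = \cdots = \alpha_{2n-1} = 1$ as the candidate extremum, where $R(x) = (x+1)^{2n-1}$, $u^* = (2n-3)/(2n-1)$, and direct computation gives $q_1 = q_{2n} = 4(2n^2-n-1)/(2n-1)^2$, yielding the bound \eqref{aa13}.

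The main obstacle is verifying that this symmetric critical point is indeed the global minimum over all admissible tuples $(\alpha_1, \ldots, \alpha_{2n-1})$: after substituting $u^*$ the expression becomes a multivariate rational function whose positivity is not transparent. I would attack this by invoking Newton's inequalities for the real-rooted polynomial $R$ to control the elementary symmetric functions of the $\alpha_i$'s, and combining them with Cauchy--Schwarz estimates $(2n-1)Y \ge X^2$ and $(2n-1)Y' \ge X'^2$ (with equality iff all $\alpha_i$ coincide). A Schur-majorization argument on the log-coordinates $\log\alpha_i$ provides an alternative route, exploiting the fact that the problem is fully symmetric in $\alpha_1,\ldots,\alpha_{2n-1}$ and self-dual under inversion. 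Sharpness is witnessed by Theorem~\ref{th:2}, and the ``in particular'' statement \eqref{a13} follows directly from \eqref{aa13} via $\max(q_1, q_{2n}) \ge (q_1 + q_{2n})/2$.
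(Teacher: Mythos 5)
Your setup is sound and your coefficient formulas check out: writing $P_{2n+1}=R\cdot T$ with $R$ real-rooted of degree $2n-1$, the cancellation of the $u^2$-terms is correct, and the identities $q_{2n}=(X+u)^2/(\tfrac12(X^2-Y)+uX+S)$, $q_1=(X'+uN)^2/(\tfrac12(X'^2-Y')+uX'N+N)$ agree with the paper's expansions \eqref{a40}--\eqref{a41}. You also correctly identify the extremal configuration $R=(x+1)^{2n-1}$, $2t=(2n-3)/(2n-1)$, and the value $q_1=q_{2n}=4(2n^2-n-1)/(2n-1)^2$, and the deduction of \eqref{a13} from \eqref{aa13} is fine.

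However, there is a genuine gap: the entire content of the theorem is the verification that the symmetric point is a \emph{global} minimum, and this is exactly the step you do not carry out. You write that you ``would attack'' it via Newton's inequalities, Cauchy--Schwarz, or Schur majorization, but none of these is executed, and it is not clear any of them closes the argument: after substituting a critical $u^*$ one is left with a multivariate rational inequality in $2n-1$ variables, and symmetry plus self-duality under inversion only identifies a critical point, not a minimizer. Two further claims are unsupported: that $\frac{d}{du}(q_1+q_{2n})=0$ has a \emph{unique positive root in closed form} (clearing denominators gives a quartic in $u$ for generic data), and the restriction to $u\ge 0$ (the hypothesis places no sign condition on $t$). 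The paper avoids the joint minimization entirely: it reads \eqref{a40}--\eqref{a41} as quadratic equations in $t$ and uses nonnegativity of their discriminants to obtain $t$-free lower bounds on $q_{2n}$ and $q_1$ \emph{separately} (\eqref{a46}, \eqref{a47}); it then chains the elementary estimates $2\sigma_2\le(n-2)S_2$, $\sigma_1(b)\le S_2(a)$, $2c\sigma_1(a)\le S_2(a)+(n-1)c^2$ and $a_j^2\ge b_j$ to reduce everything to $B_1+B_2\ge 2$, an instance of $x+1/x\ge 2$. Minimizing each $q_i$ over $t$ separately is a priori weaker than minimizing the sum, but it is what makes the proof elementary and finite; to complete your route you would need to actually prove the multivariate inequality you defer, which is the hard part.
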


\begin{notation}
We will use the following standard notation for the symmetric functions. For 
$\alpha=(\alpha_1, \alpha_2, \ldots, \alpha_{n-1}),$ 
set
\begin{equation}
\label{a14}
\sigma_1(\alpha)=\alpha_1+\alpha_2+ \cdots+ \alpha_{n-1};
\end{equation}

\begin{equation}
\label{a15}
\sigma_2(\alpha)=\prod_{1\leq i<j\leq n-1}\alpha_i \alpha_j;
\end{equation}

\begin{equation}
\label{a16}
S_2(\alpha)=\alpha_1^2+\alpha_2^2+ \cdots+ \alpha_{n-1}^2.
\end{equation}
\end{notation}

\medskip The next claim is standard. 

\begin{lemma}\label{lem:2} {\rm In the above notation, 
 $\sigma_1^2(\alpha)=S_2(\alpha)+2\sigma_2(\alpha)$ and $2\sigma_2(\alpha)\leq(n-2) S_2(\alpha).$} 
\end{lemma}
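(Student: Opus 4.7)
The plan is straightforward: both assertions follow from purely algebraic manipulations of the elementary symmetric functions in $n-1$ variables (I interpret the product sign in the definition of $\sigma_2$ as a typographical slip for a sum, since otherwise the statement below fails even in the simplest cases).

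First I would expand the square:
\begin{equation*}
\sigma_1^2(\alpha)=\Bigl(\sum_{i=1}^{n-1}\alpha_i\Bigr)^{2}=\sum_{i=1}^{n-1}\alpha_i^{2}+2\sum_{1\le i<j\le n-1}\alpha_i\alpha_j=S_2(\alpha)+2\sigma_2(\alpha),
\end{equation*}
which gives the first identity immediately.

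For the inequality $2\sigma_2(\alpha)\le (n-2)S_2(\alpha)$, I would apply the Cauchy--Schwarz inequality to the vectors $(1,1,\dots,1)$ and $(\alpha_1,\dots,\alpha_{n-1})$ in $\bR^{n-1}$, obtaining
\begin{equation*}
\sigma_1^{2}(\alpha)=\Bigl(\sum_{i=1}^{n-1}1\cdot\alpha_i\Bigr)^{2}\le (n-1)\sum_{i=1}^{n-1}\alpha_i^{2}=(n-1)S_2(\alpha).
\end{equation*}
Substituting the first identity $2\sigma_2(\alpha)=\sigma_1^{2}(\alpha)-S_2(\alpha)$ yields
\begin{equation*}
2\sigma_2(\alpha)\le (n-1)S_2(\alpha)-S_2(\alpha)=(n-2)S_2(\alpha),
\end{equation*}
as required.

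There is no real obstacle here — both statements are textbook identities. The only thing worth double-checking is the count of variables: since $\alpha$ has $n-1$ components, Cauchy--Schwarz produces the factor $(n-1)$, which after subtracting $S_2$ yields exactly the claimed constant $(n-2)$. Equality in the second inequality holds precisely when all $\alpha_i$ are equal, which is consistent with the sharp examples appearing earlier in the paper.
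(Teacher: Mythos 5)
Your proof is correct, and it is exactly the standard argument one would expect: the paper itself offers no proof, dismissing the claim as standard, so there is nothing to compare against beyond noting that your expansion of $\sigma_1^2$ plus Cauchy--Schwarz is the canonical route (and your reading of the $\prod$ in the definition of $\sigma_2$ as a typo for $\sum$ is the right one). No gaps.
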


\medskip
To formulate our next result,  notice that  for $P(x)=\sum_{k=0}^{n} a_k x^k \in 
\mathbb{R}_{+}[x] $, one obtains 
\begin{align}
\label{q1111}
&a_k  = a_1\Big(\frac{a_1}{a_0} \Big)^{k-1} \frac{1}{q_1^{k-1}q_2^{k-2}
\cdot \ldots \cdot q_{k-2}^2 q_{k-1}}, \quad 
\forall k,  2 \leq k \leq n - 1.
\end{align}

\medskip
The following proposition is an analog of the  Hutchinson theorem.

\begin{proposition}\label{prop:anya} {\rm 
Let $P(x)=\sum_{k=0}^{n} a_k x^k \in 
\mathbb{R}_{+}[x] $ and suppose that $q_k(P) \geq 1$ for all
$k,  1 \leq k \leq n - 1.$ If for some $j,  1 \leq j \leq n - 1,$ we
have $q_j(P) \geq 4, $ then there exists a point $x_j \in 
\left(- \frac{a_j}{a_{j+1}}, - \frac{a_{j-1}}{a_{j}}\right)$
such that $(-1)^j P(x_j) \geq 0.$}
\end{proposition}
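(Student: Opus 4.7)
The natural test point is the geometric mean of the endpoints, namely
\[
x_j = -r,\quad \text{where } r := \sqrt{\frac{a_{j-1}}{a_{j+1}}}.
\]
First I would verify that $x_j$ lies in the stated interval. Since $q_k \geq 1$ for all $k$, the sequence $\{a_{k-1}/a_k\}$ is non-decreasing in $k$, and in particular $\frac{a_{j-1}}{a_j} \leq \frac{a_j}{a_{j+1}}$. Squaring shows
\[
\frac{a_{j-1}}{a_j} \;\leq\; r \;\leq\; \frac{a_j}{a_{j+1}},
\]
so $x_j$ belongs to the required interval.

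Next I would write
\[
(-1)^j P(x_j) \;=\; \sum_{k=0}^{n} (-1)^{j+k} a_k r^k
\]
and isolate the three central terms ($k=j-1,j,j+1$). Using the defining identity $r^2 = a_{j-1}/a_{j+1}$, these combine to
\[
-a_{j-1}r^{j-1} + a_j r^j - a_{j+1}r^{j+1} \;=\; a_j r^j - 2 a_{j-1}r^{j-1} \;=\; a_j r^j\!\left(1 - \frac{2}{\sqrt{q_j}}\right),
\]
which is $\ge 0$ precisely because $q_j \ge 4$. This is the mechanism that produces the conclusion, and it is the only place where $q_j\ge 4$ is used.

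The core technical step is to show that the remaining terms contribute a non-negative quantity. I would pair them outward from the center: on the left, group $(j-2,j-3),(j-4,j-5),\ldots$, and on the right, group $(j+2,j+3),(j+4,j+5),\ldots$. A typical left pair contributes
\[
r^{k-1}(a_k r - a_{k-1}),\qquad k=j-2\ell,
\]
and since $\frac{a_{k-1}}{a_k}\le \frac{a_{j-1}}{a_j}\le r$ by the monotonicity given by $q_k\ge 1$, this is $\ge 0$. A typical right pair contributes
\[
r^{j+2\ell}(a_{j+2\ell} - a_{j+2\ell+1}r),
\]
which is $\ge 0$ because, symmetrically, $\frac{a_{k-1}}{a_k}\ge \frac{a_j}{a_{j+1}}\ge r$ for $k\ge j+1$.

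Finally, depending on the parities of $j$ and $n-j$, the pairing may leave a single unpaired term, either at $k=0$ (contributing $+a_0$, since the sign is $(-1)^j\cdot (-1)^0 = +1$ when $j$ is even, which is exactly the parity forcing a leftover at $0$) or at $k=n$ (contributing $+a_n r^n$, by the analogous parity check). Both leftover terms are manifestly non-negative. Summing the central contribution, the paired contributions, and any leftover gives $(-1)^j P(x_j)\ge 0$, completing the argument. The main obstacle is really just bookkeeping: choosing the outward pairing from the center so that both the monotonicity inequalities point the right way and the unpaired remainder (when present) has the positive sign.
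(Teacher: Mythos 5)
Your proof is correct and follows essentially the same route as the paper's: you pick the same test point (the negative geometric mean $-\sqrt{a_{j-1}/a_{j+1}}$ of the interval endpoints), isolate the same central three-term block whose sign is controlled by $q_j\ge 4$, and your outward pairwise grouping of the tails is precisely the standard realization of the paper's observation that those tails are alternating with monotone moduli and hence nonnegative. The only cosmetic difference is that the paper normalizes to $a_0=a_1=1$ and works with the $q_k$'s explicitly, while you keep the raw coefficients.
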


For the proof of Proposition~\ref{prop:anya} see \S~\ref{sec:proofs}.

\section{Proofs}\label{sec:proofs}

{\it Proof of Theorem~\ref{th:3}.}
We will deal with the sequences:

$$a=(a_1,a_2, \ldots, a_{n-1}), \ \ b=(b_1,b_2, \ldots, b_{n-1}), $$

\begin{equation}
\label{a17}
\frac{1}{b}=\left(\frac{1}{b_1},\frac{1}{b_2}, \ldots, \frac{1}{b_{n-1}}\right), \ \ \frac{a}{b}=\left(\frac{a_1}{b_1},\frac{a_2}{b_2}, \ldots, \frac{a_{n-1}}{b_{n-1}}\right). 
\end{equation}

$$ab^{n-2}=\left(a_1 b_2 b_3 \cdots b_{n-1}, \ a_2 b_1 b_3 \cdots b_{n-1}, \ldots, \  a_{n-1}b_1 b_2 \cdots b_{n-2}\right).$$

Hence $\sigma _1(\frac{a}{b})b_1b_2\cdots b_{n-1}=\sigma _1(ab^{n-2})$.   We have 

$$ P_{2n}= \left(x^2+2tx+\frac{1}{b_1b_2\cdots b_{n-1}}\right) \prod_{j=1}^{n-1} (x^2+2a_jx+b_j)$$

$$= x^{2n}+2x^{2n-1}(\sigma_1(a)+t)$$ 
$$ + x^{2n-2} \left(4t \sigma_1(a)+4 \sigma_2(a) +\sigma_1(b)+\frac{1}{b_1b_2\cdots b_{n-1}}\right) $$

$$+\cdots + x^2 \left(4t \sigma_1(ab^{n-2})+4\sigma_2 \left(\frac{a}{b} \right)+\sigma_1\left(\frac{1}{b} \right)+b_1 b_2\cdots b_{n-1}\right)$$ 
$$ + 2x\left(\sigma_1\left(\frac{a}{b} \right)+b_1 b_2\cdots b_{n-1}t \right)+1.$$

Therefore,

\begin{equation}
\label{a18}
q_{2n-1}=\frac{(\sigma_1(a)+t)^2}{t \sigma_1(a)+ \sigma_2(a) +\frac{1}{4}\cdot \sigma_1(b)+\frac{1}{4}\frac{1}{b_1b_2\cdots b_{n-1}}}
\end{equation}

and

\begin{equation}
\label{a19}
q_{1}=\frac{\left(\sigma_1\left(\frac{a}{b} \right)+b_1 b_2\cdots b_{n-1}t \right)^2}{t \sigma_1(ab^{n-2})+ \sigma_2 \left(\frac{a}{b} \right) +\frac{1}{4}\cdot \sigma_1\left(\frac{1}{b} \right)+\frac{1}{4}b_1b_2\cdots b_{n-1}}.
\end{equation}

\medskip
Identities (\ref{a18}) and (\ref{a19}) can be rewritten in the form:

$$t^2+(2-q_{2n-1})\sigma_1(a)t+\sigma_1^2(a) -q_{2n-1} \sigma_2(a)$$

\begin{equation}
\label{a20}
- \frac{1}{4}q_{2n-1}\cdot \sigma_1(b)-\frac{q_{2n-1}}{4b_1b_2\cdots b_{n-1}}=0
\end{equation}

and

$$(b_1b_2\cdots b_{n-1})^2 t^2+(2-q_1)\sigma_1(ab^{n-2})t+\sigma_1^2\left(\frac{a}{b}\right) $$

\begin{equation}
\label{a21}
 - q_1 \sigma_2\left(\frac{a}{b}\right) -\frac{1}{4} q_1\cdot \sigma_1 \left( \frac{1}{b}\right)-\frac{q_1}{4} \cdot b_1b_2\cdots b_{n-1}=0.
\end{equation}

We have two quadratic equations with respect to the variable $t$. Given that these equations have real solutions, i.e. that  their discriminants are non-negative, we obtain the following two inequalities: 

\begin{equation}
\label{a22}
(2-q_{2n-1})^2\sigma_1^2 (a)-4\sigma_1^2 (a)+4q_{2n-1} \sigma_2(a)+ q_{2n-1}\cdot \sigma_1(b)+\frac{q_{2n-1}}{b_1b_2\cdots b_{n-1}}\geq 0
\end{equation}

and

$$(2-q_1)^2 \sigma_1^2(ab^{n-2})-4(b_1b_2\cdots b_{n-1})^2\sigma_1^2\left(\frac{a}{b}\right)$$

\begin{equation}
\label{a23}
 + q_1\cdot (b_1b_2\cdots b_{n-1})^2
\left(4\sigma_2\left(\frac{a}{b}\right) + \sigma_1 \left( \frac{1}{b}\right)+b_1b_2\cdots b_{n-1}\right)\geq 0.
\end{equation}

The inequality (\ref{a22}) is equivalent to the estimate 

\begin{equation}
\label{a24}
q_{2n-1} \geq 4-\frac{4 \sigma_2(a)+\sigma_1(b)+\frac{1}{b_1b_2\cdots b_{n-1}}}{\sigma_1^2(a)}.
\end{equation}

Note that

\begin{equation}
\label{a25}
(b_1b_2\cdots b_{n-1})^2\sigma_1^2\left(\frac{a}{b}\right)=\sigma_1^2(ab^{n-2}),
\end{equation}

and

\begin{equation}
\label{a26}
(b_1b_2\cdots b_{n-1})^2\sigma_2\left(\frac{a}{b}\right)=\sigma_2(ab^{n-2}).
\end{equation}

So, we can rewrite the inequality (\ref{a23}) in the form

\begin{equation}
\label{a27}
q_1 \geq 4-\frac{4 \sigma_2(ab^{n-2})+(b_1b_2\cdots b_{n-1})^2\sigma_1\left(\frac{1}{b}\right)+(b_1b_2\cdots b_{n-1})^3}{\sigma_1^2(ab^{n-2})}. 
\end{equation}

Given that $a_j^2\geq b_j, \ \ j=1, 2, \ldots, n-1,$ the  following inequalities hold:

\begin{equation}
\label{a28}
\sigma_1(b)\leq S_2(a)
\end{equation}

and

\begin{equation}
\label{a29}
(b_1b_2\cdots b_{n-1})^2\sigma_1\left(\frac{1}{b}\right) \leq S_2(ab^{n-2}). 
\end{equation}

Thus, it follows from (\ref{a24}) and the first statement of Lemma~\ref{lem:2} that

\begin{equation}
\label{a30}
q_{2n-1} \geq 4-\frac{4 \sigma_2(a)+S_2(a)+\frac{1}{b_1b_2\cdots b_{n-1}}}{S_2(a)+2\sigma_2(a)} \geq 3-\frac{2 \sigma_2(a)+\frac{1}{b_1b_2\cdots b_{n-1}}}{S_2(a)+2\sigma_2(a)}
\end{equation}

and from (\ref{a27}) that

$$
q_1 \geq 4-\frac{4 \sigma_2(ab^{n-2})+S_2(ab^{n-2})+(b_1b_2\cdots b_{n-1})^3}{S_2(ab^{n-2})+2\sigma_2(ab^{n-2})}$$

\begin{equation}
\label{a31}
\geq 3-\frac{2 \sigma_2(ab^{n-2})+(b_1b_2\cdots b_{n-1})^3}{S_2(ab^{n-2})+2\sigma_2(ab^{n-2})}. 
\end{equation}

Inequalities (\ref{a30}) and (\ref{a31}) imply that the statement of Theorem~\ref{th:3} would follow from the fact that one of the two inequalities below is valid: either

\begin{equation}
\label{a32}
\frac{2 \sigma_2(a)+\frac{1}{b_1b_2\cdots b_{n-1}}}{S_2(a)+2\sigma_2(a)}\leq \frac{n^2-3n+3}{(n-1)^2}
\end{equation}

or

\begin{equation}
\label{a33}
\frac{2 \sigma_2(ab^{n-2})+(b_1b_2\cdots b_{n-1})^3}{S_2(ab^{n-2})+2\sigma_2(ab^{n-2})}\leq \frac{n^2-3n+3}{(n-1)^2}.
\end{equation}

\medskip
The inequality (\ref{a32}) is equivalent to 
\begin{equation}
\label{a34}
1 \leq  \frac{b_1b_2\cdots b_{n-1}}{(n-1)^2}\left( (n^2-3n+3)S_2(a)-(n-2) \cdot 2 \sigma_2(a)\right),
\end{equation}

while (\ref{a33}) is equivalent to the inequality 

\begin{equation}
\label{a35}
 1\leq \frac{1}{(n-1)^2 (b_1b_2\cdots b_{n-1})^3}\left((n^2-3n+3)S_2(ab^{n-2})-(n-2)\cdot 2 \sigma_2(ab^{n-2})\right). 
\end{equation}

\medskip
Note that applying  the second statement of Lemma~\ref{lem:2} and after that inequalities $a_j^2\geq b_j, \ \ j=1,2,\ldots, n-1,$ we get

$$\frac{b_1b_2\cdots b_{n-1}}{(n-1)^2}\left((n^2-3n+3)S_2(a)-(n-2) \cdot 2 \sigma_2(a)\right) $$

$$ \geq \frac{b_1b_2\cdots b_{n-1}}{(n-1)^2}\left((n^2-3n+3)S_2(a)-(n-2)^2 S_2(a)\right)$$

$$ = \frac{b_1b_2\cdots b_{n-1}}{(n-1)} S_2(a)\geq \frac{b_1b_2\cdots b_{n-1}\sigma_1(b)}{(n-1)}$$

\begin{equation}
\label{a36}
  =\frac{1}{n-1} \left( b_1^2 b_2\cdots b_{n-1}+b_1b_2^2 \cdots b_{n-1}+b_1b_2\cdots b_{n-1}^2 \right),
\end{equation}

and

$$ \frac{1}{(n-1)^2 (b_1b_2\cdots b_{n-1})^3}\left((n^2-3n+3)S_2(ab^{n-2})-(n-2)\cdot 2 \sigma_2(ab^{n-2})\right)$$

$$ \geq \frac{1}{(n-1)^2 (b_1b_2\cdots b_{n-1})^3}\left((n^2-3n+3)S_2(ab^{n-2})-(n-2)^2S_2(ab^{n-2})\right) $$ 

$$=\frac{1}{(n-1) (b_1b_2\cdots b_{n-1})^3}S_2(ab^{n-2}) $$

$$ \geq \frac{\left( b_1 b_2^2\cdots b_{n-1}^2+b_1^2 b_2 \cdots b_{n-1}^2+b_1^2 b_2^2\cdots b_{n-1}^2 \right)}{(n-1) (b_1b_2\cdots b_{n-1})^3}$$

\begin{equation}
\label{a37}
  =\frac{1}{n-1} \left( b_1^{-2} b_2^{-1}\cdots b_{n-1}^{-1}+b_1^{-1}b_2^{-2} \cdots b_{n-1}^{-1}+b_1^{-1}b_2^{-1}\cdots b_{n-1}^{-2 }\right).
\end{equation}

\medskip

Denote by 
\begin{equation}
\label{a38}
 B_1=:\frac{1}{n-1} \left( b_1^2 b_2\cdots b_{n-1}+b_1b_2^2 \cdots b_{n-1}+b_1b_2\cdots b_{n-1}^2 \right) 
\end{equation}
and by
\begin{equation}
\label{a39}
 B_2=:\frac{1}{n-1} \left( (b_1^2 b_2\cdots b_{n-1})^{-1}+(b_1b_2^2 \cdots b_{n-1})^{-1}+(b_1b_2\cdots b_{n-1}^2)^{-1} \right).
\end{equation}

Theorem~\ref{th:3} will be proved if we show that $B_1+B_2 \geq 2.$ Indeed, 
since for each $x>0$, the inequality
$x+\frac{1}{x}\geq 2$
is valid, we get

$$B_1+B_2\geq \frac{1}{n-1}\cdot 2 (n-1)=2.$$

In particular, at least one of the numbers $B_1$ and $B_2$ must be bigger than $1.$

Theorem~\ref{th:3} follows. \qed\\

{\it Proof of Theorem~\ref{th:4}.}  (The logic of this proof is very similar to that of Theorem~\ref{th:3}). 

We have

$$ P_{2n+1}=(x+c) \left(x^2+2tx+\frac{1}{b_1b_2\cdots b_{n-1}c}\right) \prod_{j=1}^{n-1} (x^2+2a_jx+b_j)$$

$$ = x^{2n+1}+2x^{2n}(\sigma_1(a)+\frac{c}{2}+t)+$$ 

$$x^{2n-1} \left(4t \left(\sigma_1(a)+\frac{c}{2}\right)+4 \sigma_2(a)+2c\sigma_1(a) +\sigma_1(b)+\frac{1}{b_1b_2\cdots b_{n-1}c}\right)+\cdots+$$

$$x^2 \left(4t \left( c\sigma_1(ab^{n-2})+\frac{b_1 b_2\ldots b_{n-1}}{2} \right)+4\sigma_2 \left(\frac{a}{b} \right)+\frac{2}{c} \sigma_1 \left(\frac{a}{b}\right)+\sigma_1\left(\frac{1}{b} \right)+c b_1 b_2\cdots b_{n-1}\right)$$ $$ + 2x\left(\sigma_1\left(\frac{a}{b} \right)+\frac{1}{2c}+c b_1 b_2\cdots b_{n-1}t \right)+1.$$

Therefore,

\begin{equation}
\label{a40}
q_{2n}=\frac{(\sigma_1(a)+\frac{c}{2}+t)^2}{t (\sigma_1(a)+\frac{c}{2})+ \sigma_2(a) +\frac{c}{2}\sigma_1(a)+\frac{1}{4}\cdot \sigma_1(b)+\frac{1}{4}\frac{1}{c b_1b_2\cdots b_{n-1}}}
\end{equation}

and

$$q_{1}=$$

\begin{equation}
\label{a41}
\frac{\left(\sigma_1\left(\frac{a}{b} \right)+\frac{1}{2c}+c b_1 b_2\cdots b_{n-1}t \right)^2}{t \left(c\sigma_1(ab^{n-2})+\frac{b_1 b_2\ldots b_{n-1}}{2}\right)+ \sigma_2 \left(\frac{a}{b} \right)+\frac{\sigma_1\left(\frac{a}{b}\right)}{2c} +\frac{1}{4}\cdot \sigma_1\left(\frac{1}{b} \right)+\frac{c}{4}b_1b_2\cdots b_{n-1}}.
\end{equation}

The identities (\ref{a40}) and (\ref{a41}) can be rewritten in the form:

$$t^2+(2-q_{2n})\left(\sigma_1(a)+\frac{c}{2}\right)t+\left(\sigma_1(a)+\frac{c}{2}\right)^2-q_{2n} \sigma_2(a)$$

\begin{equation}
\label{a42}
- \frac{c}{2}q_{2n}\sigma_1(a)-\frac{1}{4}q_{2n}\cdot \sigma_1(b)-\frac{q_{2n}}{4c b_1b_2\cdots b_{n-1}}=0
\end{equation}

and

$$(c b_1b_2\cdots b_{n-1})^2 t^2+(2-q_1)\left(c\sigma_1(ab^{n-2})+\frac{b_1b_2\cdots b_{n-1}}{2}\right)t$$ $$ + \left(\sigma_1\left(\frac{a}{b}\right) +\frac{1}{2c}\right)^2-q_1 \sigma_2\left(\frac{a}{b}\right) -\frac{q_1}{2c}\sigma_1\left(\frac{a}{b}\right)$$

\begin{equation}
\label{a43}
-\frac{1}{4} q_1\cdot \sigma_1 \left( \frac{1}{b}\right)-\frac{c q_1}{4} \cdot b_1b_2\cdots b_{n-1}=0.
\end{equation}

We have two quadratic equations with respect to the variable $t$. Given that these equations have real solutions, i.e. that  their discriminants are non-negative, we obtain the inequalities: 

$$(2-q_{2n})^2\left(\sigma_1(a)+\frac{c}{2}\right)^2-4\left(\sigma_1(a)+\frac{c}{2}\right)^2$$
\begin{equation}
\label{a44}
 + 4q_{2n} \sigma_2(a)+2c q_{2n} \sigma_1(a)+ q_{2n}\cdot \sigma_1(b)+\frac{q_{2n}}{cb_1b_2\cdots b_{n-1}}\geq 0
\end{equation}

and

$$(2-q_1)^2\left(c\sigma_1(ab^{n-2})+\frac{b_1 b_2\ldots b_{n-1}}{2}\right)^2-4(c b_1b_2\cdots b_{n-1})^2\left(\sigma_1\left(\frac{a}{b} \right)+\frac{1}{2c}\right)^2$$

\begin{equation}
\label{a45}
+ q_1\cdot ( b_1b_2\cdots b_{n-1})^2
\left(4c^2\sigma_2\left(\frac{a}{b}\right)+2c \sigma_1\left(\frac{a}{b}\right)+ c^2\sigma_1 \left( \frac{1}{b}\right)+c^3 b_1b_2\cdots b_{n-1}\right)\geq 0.
\end{equation}

Inequality (\ref{a44}) is equivalent to the estimate

\begin{equation}
\label{a46}
q_{2n} \geq 4-\frac{4 \sigma_2(a)+2c\sigma_1(a)+\sigma_1(b)+\frac{1}{c b_1b_2\cdots b_{n-1}}}{\left(\sigma_1(a)+\frac{c}{2}\right)^2}.
\end{equation}

Using (\ref{a25}) and (\ref{a26}) we can rewrite (\ref{a45}) in the form

$$q_1 \geq 4-\frac{4c^2 \sigma_2(ab^{n-2})+2(c b_1b_2\cdots b_{n-1})\sigma_1(ab^{n-2})}{\left(c\sigma_1(ab^{n-2})+\frac{b_1 b_2\ldots b_{n-1}}{2}\right)^2}$$

\begin{equation}
\label{a47}
-\frac{(c b_1b_2\cdots b_{n-1})^2\sigma_1\left(\frac{1}{b}\right)+(c b_1b_2\cdots b_{n-1})^3}{\left(c\sigma_1(ab^{n-2})+\frac{b_1 b_2\ldots b_{n-1}}{2}\right)^2}.
\end{equation}

Applying  (\ref{a28}), (\ref{a29}) and Lemma~\ref{lem:2} to (\ref{a46}) we obtain 

$$q_{2n} \geq 4-\frac{4 \sigma_2(a)+2c\sigma_1(a)+S_2(a)+\frac{1}{c b_1b_2\cdots b_{n-1}}}{S_2(a)+2\sigma_2(a)+c\sigma_1(a)+\frac{c^2}{4}} $$

\begin{equation}
\label{a48}
\geq 3-\frac{2 \sigma_2(a)+c \sigma_1(a)+\frac{1}{c b_1b_2\cdots b_{n-1}}-\frac{c^2}{4}}{S_2(a)+2\sigma_2(a)+c\sigma_1(a)+\frac{c^2}{4}}.
\end{equation}\\

Using the same idea we can derive the inequality below from (\ref{a47})\\
$$
q_1 \geq 4-$$ $$\frac{4c^2 \sigma_2(ab^{n-2})+2(c b_1b_2\cdots b_{n-1})\sigma_1(ab^{n-2})}{c^2S_2(ab^{n-2})+2c^2\sigma_2(ab^{n-2})+(c b_1b_2\cdots b_{n-1})\sigma_1(ab^{n-2})+\frac{1}{4}(b_1b_2\cdots b_{n-1})^2}$$ 

$$-\frac{c^2 S_2(ab^{n-2})+(c b_1b_2\cdots b_{n-1})^3}{c^2S_2(ab^{n-2})+2c^2\sigma_2(ab^{n-2})+(c b_1b_2\cdots b_{n-1})\sigma_1(ab^{n-2})+\frac{1}{4}(b_1b_2\cdots b_{n-1})^2}$$

\begin{equation}
\label{a49}
\geq 3-\frac{2 \sigma_2(ab^{n-2})+\frac{b_1b_2\cdots b_{n-1}}{c}\sigma_1(ab^{n-2})+c(b_1b_2\cdots b_{n-1})^3-\frac{(b_1b_2\cdots b_{n-1})^2}{4c^2}}{S_2(ab^{n-2})+2\sigma_2(ab^{n-2})+\frac{b_1b_2\cdots b_{n-1}}{c}\sigma_1(ab^{n-2})+\frac{(b_1b_2\cdots b_{n-1})^2}{4c^2}}.
\end{equation}

It follows from (\ref{a48}) and (\ref{a49}) that the statement of Theorem~\ref{th:4} would follow from the fact that one of the two inequalities below is valid: either

\begin{equation}
\label{a50}
\frac{2 \sigma_2(a)+c \sigma_1(a)+\frac{1}{c b_1b_2\cdots b_{n-1}}-\frac{c^2}{4}}{S_2(a)+2\sigma_2(a)+c\sigma_1(a)+\frac{c^2}{4}}\leq \frac{4n^2-8n+7}{(2n-1)^2}
\end{equation}

or

$$\frac{2 \sigma_2(ab^{n-2})+\frac{b_1b_2\cdots b_{n-1}}{c}\sigma_1(ab^{n-2})+c(b_1b_2\cdots b_{n-1})^3-\frac{(b_1b_2\cdots b_{n-1})^2}{4c^2}}{S_2(ab^{n-2})+2\sigma_2(ab^{n-2})+\frac{b_1b_2\cdots b_{n-1}}{c}\sigma_1(ab^{n-2})+\frac{(b_1b_2\cdots b_{n-1})^2}{4c^2}}$$

\begin{equation}
\label{a51}
\leq\frac{4n^2-8n+7}{(2n-1)^2}.
\end{equation}

The inequality (\ref{a50}) is equivalent to the following one

$$1 \leq  \frac{c b_1b_2\cdots b_{n-1}}{(2n-1)^2}( (4n^2-8n+7)S_2(a)$$

\begin{equation}
\label{a52}
- (4n-6) \cdot 2 \sigma_2(a)-(4n-6)c\sigma_1(a)+(2n^2-3n+2)c^2)
\end{equation}

while (\ref{a51}) is equivalent to the inequality 

$$1\leq \frac{1}{(2n-1)^2 c (b_1b_2\cdots b_{n-1})^3}((4n^2-8n+7)S_2(ab^{n-2})-(4n-6)\cdot 2 \sigma_2(ab^{n-2})$$

\begin{equation}
\label{a53}
 -  (4n-6)\frac{b_1 b_2\cdots b_{n-1}}{c} \sigma_1(ab^{n-2})+(2n^2-3n+2)\frac{(b_1 b_2\cdots b_{n-1})^2}{c^2}).
\end{equation}

Applying  the second statement of Lemma~\ref{lem:2} and
$2c\sigma _1(a)\leq S_2(a)+(n-1)c^2,$
  and after that inequalities 
$a_j^2\geq b_j, \ \ j=1,2,\ldots, n-1,$ to the right-hand part of the above inequalities we will have

$$\frac{c b_1b_2\cdots b_{n-1}}{(2n-1)^2}( (4n^2-8n+7)S_2(a)-(4n-6) \cdot 2 \sigma_2(a)$$

$$ - (4n-6)c\sigma_1(a)+(2n^2-3n+2)c^2))\geq \frac{c b_1b_2\cdots b_{n-1}}{(2n-1)^2}( (4n^2-8n+7)S_2(a)$$

$$ - (4n-6)(n-2)S_2(a)-(2n-3)(S_2(a)+(n-1)c^2)+(2n^2-3n+2)c^2) $$

$$ = \frac{c b_1b_2\cdots b_{n-1}}{(2n-1)^2}((4n-2)S_2(a)+(2n-1)c^2) $$

$$ \geq  \frac{c b_1b_2\cdots b_{n-1}}{(2n-1)^2}((4n-2)\sigma_1(b)+(2n-1)c^2)$$

\begin{equation}
\label{a54}
 \geq  \frac{2}{2n-1} \left(c b_1^2 b_2\cdots b_{n-1}+c b_1b_2^2 \cdots b_{n-1}+
  c b_1b_2\cdots b_{n-1}^2 +\frac{c^3b_1 b_2\cdots b_{n-1}}{2}\right)
\end{equation}

and

$$ \frac{1}{(2n-1)^2 c (b_1b_2\cdots b_{n-1})^3}((4n^2-8n+7)S_2(ab^{n-2})-(4n-6)
\cdot 2 \sigma_2(ab^{n-2})$$

$$- (4n-6)\frac{b_1 b_2\cdots b_{n-1}}{c} \sigma_1(ab^{n-2})+(2n^2-3n+2)
\frac{(b_1 b_2\cdots b_{n-1})^2}{c^2})$$

$$ \geq \frac{1}{(2n-1)^2 c (b_1b_2\cdots b_{n-1})^3}((4n^2-8n+7)
S_2(ab^{n-2})-(4n-6)(n-2)S_2(ab^{n-2}) $$

$$ - (2n-3)\left((n-1)\frac{(b_1 b_2\cdots b_{n-1})^2}{c^2}+ S_2(ab^{n-2})\right)+
(2n^2-3n+2)\frac{(b_1 b_2\cdots b_{n-1})^2}{c^2})$$

$$ = \frac{1}{(2n-1)^2 c (b_1b_2\cdots b_{n-1})^3}\left((4n-2)S_2(ab^{n-2})+
(2n-1)\frac{(b_1 b_2\cdots b_{n-1})^2}{c^2}\right)$$

$$ \geq \frac{2}{2n-1}\left(\frac{\left( b_1 b_2^2\cdots b_{n-1}^2+b_1^2 b_2 b_3^2 \cdots 
b_{n-1}^2+b_1^2 b_2^2\cdots b_{n-2}^2 b_{n-1} \right)}{c (b_1b_2\cdots b_{n-1})^3}+
\frac{1}{2c^3b_1 b_2\cdots b_{n-1}}\right)$$

\begin{eqnarray}
\label{a55} & \nonumber 
  =\frac{2}{2n-1} \left((c b_1^2 b_2\cdots b_{n-1})^{-1}+(c b_1b_2^2 b_3 \cdots b_{n-1})^{-1}+
  (c b_1 b_2\cdots b_{n-2} b_{n-1}^2)^{-1} \right.  \\   &
\left.  +\frac{1}{2c^3b_1 b_2\cdots b_{n-1}}\right).
\end{eqnarray}

Denote by 

$$
 B_1=\frac{2}{2n-1} \left(c b_1^2 b_2\cdots b_{n-1}+c b_1b_2^2 
 \cdots b_{n-1}+c b_1b_2\cdots b_{n-1}^2 +\frac{c^3b_1 b_2\cdots b_{n-1}}{2}\right)
$$
and
$$
 B_2=\frac{2}{2n-1} \left((c b_1^2 b_2\cdots b_{n-1})^{-1}+(c b_1b_2^2 
 \cdots b_{n-1})^{-1}+(c b_1 b_2\cdots b_{n-1}^2)^{-1}+\frac{1}{2c^3b_1 b_2\cdots b_{n-1}}\right).
$$

Theorem~\ref{th:4}  will be proved if we show that $B_1+B_2\geq 1.$ Indeed, 
since for each $x>0$ the inequality
$x+\frac{1}{x}\geq 2$
is valid, we have

$$B_1+B_2\geq \frac{2}{2n-1}\cdot( 2 (n-1)+1)=2.$$

In particular, at least one of the numbers $B_1$ and $B_2$ must be bigger than $1.$

Theorem~\ref{th:4} is proved.    \qed\\

\medskip
Next let us show how to derive Corollary~\ref{cor:2} from Corollary~\ref{cor:1}. 

\medskip
{\it Proof of Corollary~\ref{cor:2}.} 
Consider the polynomial

$$P(x)=a_n x^n+\cdots+a_{m+j+1}x^{m+j+1}+a_{m+j}x^{m+j}+a_{m+j-1}x^{m+j-1}$$
\begin{equation}
\label{a58}
 +  \cdots+a_{j+1}x^{j+1}+a_j x^j+a_{j-1}x^{j-1}+\cdots+a_0. 
\end{equation}\\

Differentiating the polynomial  $P(x)$ $(j-1)$ times we get:

$$P^{(j-1)}(x)=\frac{n!}{(n-j+1)!}a_n x^{n-j+1}+\cdots+\frac{(m+j+1)!}{(m+2)!}a_{m+j+1}x^{m+2}$$ 

$$+\frac{(m+j)!}{(m+1)!}a_{m+j}x^{m+1}+\frac{(m+j-1)!}{m!}a_{m+j-1}x^{m}$$

\begin{equation}
\label{a59}
+  \cdots+\frac{(j+1)!}{2!}a_{j+1}x^2+\frac{j!}{1!} a_j x+(j-1)!a_{j-1}. 
\end{equation}

Consider the polynomial 

$$Q_{n-j+1}(x)=x^{n-j+1} P \left(\frac{1}{x}\right)$$  $$ = \frac{n!}{(n-j+1)!}a_n 
+\cdots+\frac{(m+j+1)!}{(m+2)!}a_{m+j+1}x^{n-m-j-1}$$

$$+\frac{(m+j)!}{(m+1)!}a_{m+j}x^{n-m-j}+\frac{(m+j-1)!}{m!}a_{m+j-1}x^{n-m-j+1}$$

\begin{equation}
\label{a60}
+ \cdots+\frac{(j+1)!}{2!}a_{j+1}x^{n-j-1}+\frac{j!}{1!} a_j x^{n-j}+(j-1)!a_{j-1}x^{n-j+1}. 
\end{equation}\\

Differentiating the above polynomial $(n-m-j-1)$ times we get:

$$Q_{n-j+1}^{(n-m-j-1)}(x)=(n-m-j-1)!\frac{(m+j+1)!}{(m+2)!}a_{m+j+1}$$

$$+\frac{(n-m-j)!}{1!}\cdot \frac{(m+j)!}{(m+1)!}a_{m+j}x+\frac{(n-m-j+1)!}{2!}\cdot \frac{(m+j-1)!}{m!}a_{m+j-1}x^2$$

 $$ + \cdots+\frac{(n-j-1)!}{m!}\cdot \frac{(j+1)!}{2!}a_{j+1}x^m+\frac{(n-j)!}{(m+1)!}\cdot \frac{j!}{1!} a_j x^{m+1}$$

\begin{equation}
\label{a61}
+ \frac{(n-j+1)!}{(m+2)!}\cdot(j-1)!a_{j-1}x^{m+2}. 
\end{equation}\\

Let's evaluate $q_{m+1}$ and $q_1$ for the last polynomial. We have

\begin{equation}
\label{a62}
q_{m+1}\left(Q_{n-j+1}^{(n-m-j-1)}\right)=\frac{2(m+2)}{m+1}\cdot \frac{j}{j+1}\cdot \frac{n-j}{n-j+1}\cdot q_j
\end{equation}

\begin{equation}
\label{a63}
q_1\left(Q_{n-j+1}^{(n-m-j-1)}\right)=\frac{2(m+2)}{m+1}\cdot \frac{m+j}{m+j+1}\cdot \frac{n-m-j}{n-m-j+1}\cdot q_{m+j}. 
\end{equation}\\

Applying the given estimations (\ref{b2}) and (\ref{b3}), we obtain

\begin{equation}
\label{a64}
\max\left(q_1\left(Q_{n-j+1}^{(n-m-j-1)}\right), q_{m+1}\left(Q_{n-j+1}^{(n-m-j-1)}\right)\right)<\frac{2(m+2)(m-1)}{m^2}. 
\end{equation}\\

Note that $\deg Q_{n-j+1}^{(n-m-j-1)}=m+2.$ If we denote by $n =m+2,$ we will obtain (\ref{c1}). Therefore, 
by Corollary~\ref{cor:1}

\begin{equation}
\label{c2}
\sharp_r\left(Q_{n-j+1}^{(n-m-j-1)}\right)\leq m-2. 
\end{equation}

By virtue of Rolle's theorem

\begin{equation}
\label{c3}
\sharp_r(Q_{n-j+1})\leq n-j-3. 
\end{equation}

It is clear that for any polynomial $P(x)=\sum_{k=0}^{n}a_k x^k$, the  following identity holds true 

\begin{equation}
\label{c4}
\sharp_r(P(x))=\sharp_r\left(x^n P\left(\frac{1}{x}\right)\right).
\end{equation}\\

So, by (\ref{a60}) we have

\begin{equation}
\label{c5}
\sharp_r(P^{(j-1)})\leq n-j-3. 
\end{equation}

Applying Rolle's theorem we can conclude that

\begin{equation}
\label{c6}
\sharp_r(P)\leq n-4. 
\end{equation}

Corollary~\ref{cor:2}  is proved.  \qed\\

\begin{proof}[Proof of Proposition~\ref{pr:1}] 
Let
$$f(x)=(x+1)^m(ax^2+bx+c)$$ be a polynomial with positive coefficients,
such that the quadratic factor $ax^2+bx+c$ is irreducible. Firstly we will prove that 
there exists an anti-derivative $P$ of $f$   with positive coefficients  such that 
$P(x) =(x+1)^{m+1} (Ax^2+Bx+C)$, where
the quadratic factor $Ax^2+B x+C$ is irreducible.

Let $F$ be an anti-derivative of $f$ such that $F(0)=0.$ Obviously, all 
coefficients of $F$ are positive. Define the polynomial $P$ as follows
$$
P(x)=F(x)+C, \ \ \mbox{where} \ \ P(-1)=0.
$$
So, 
$$C=-F(-1)=\int_{-1}^{0}(t+1)^m(at^2+bt+c) d t>0.$$\\

Therefore, 

$$P(x)=(x+1)^{m+1}(Ax^2+Bx+C) \in \mathbb{R}_{+}[x],$$
where by virtue of Rolle's theorem the polynomial $Ax^2+Bx+C$ is irreducible,
and $P'(x)=f(x).$\\

Now we can easily prove Proposition~\ref{pr:1}. 
Consider the polynomial 
\begin{eqnarray}
\label{neq1} & P_{m+2}(x) = (x+1)^m (x^2+ \frac{m-2}{m}x +1)= x^{m+2}+
\frac{(m+2)(m-1)}{m} x^{m+1} \\
\nonumber  &
+  \frac{(m+2)(m-1)}{2} x^{m} + \ldots +  \frac{(m+2)(m-1)}{2} x^2 + 
\frac{(m+2)(m-1)}{m} x +1.
\end{eqnarray}
Obviously, for the polynomial $ P_{m+2}$ we have
$$q_1 = q_{m+1} = \frac{2(m+2)(m-1)}{m^2}.  $$
Let us integrate the polynomial $ P_{m+2}$ from (\ref{neq1}) $(n-m-j-1)$
times. We will use the statement in the beginning of the proof to obtain
the resulting polynomial with positive coefficients in the form 
$$  P_{n-j+1} =(x+1)^{n-j-1} (Ax^2 +Bx+C) =\frac{(m+2)!}{(n-j+1)!} x^{n-j+1} 
$$
$$+ \frac{(m+2)(m-1)}{m}\cdot \frac{(m+1)!}{(n-j)!}x^{n-j} +
\frac{(m+2)(m-1)}{2}\cdot \frac{m!}{(n-j-1)!}x^{n-j-1} $$
$$ + \ldots + \frac{(m+2)(m-1)}{2}\cdot \frac{2!}{(n-m-j+1)!}x^{n-m-j+1}  $$
$$+  \frac{(m+2)(m-1)}{m}\cdot \frac{1!}{(n-m-j)!}x^{n-m-j}   + 
 \frac{0!}{(n-m-j-1)!}x^{n-m-j-1}   + \ldots $$
Let us consider the polynomial $\tilde{P}_{n-j+1}(x) =x^{n-j+1} P_{n-j+1}(\frac{1}{x})
= (x+1)^{n-j-1} (C x^2 +Bx+A). $ We will integrate the polynomial $\tilde{P}_{n-j+1}$
$(j-1)$ times. We will use the statement in the beginning of the proof to obtain
the resulting polynomial with positive coefficients in the form 
$$\tilde{P}_{n}(x) = (x+1)^{n-2}(\tilde{A} x^2 + \tilde{B} x + \tilde{C}) = \ldots +
\frac{(m+2)!}{(n-j+1)!}\cdot \frac{0!}{(j-1)!}x^{j-1} $$
$$ + \frac{(m+2)(m-1)}{m}\cdot\frac{(m+1)!}{(n-j)!}\cdot \frac{1!}{j!}x^{j}
+\frac{(m+2)(m-1)}{2}\cdot\frac{m!}{(n-j-1)!}\cdot \frac{2!}{(j+1)!}x^{j+1}   $$
$$ + \ldots +   \frac{(m+2)(m-1)}{2}\cdot\frac{2!}{(n-m-j+1)!}\cdot 
\frac{m!}{(m+j-1)!}x^{m+j-1}    $$
$$ +  \frac{(m+2)(m-1)}{m}\cdot\frac{1!}{(n-m-j)!}\cdot \frac{(m+1)!}{(m+j)!}x^{m+j}  $$
$$ +  \frac{0!}{(n-m-j-1)!}\cdot \frac{(m+2)!}{(m+j+1)!}x^{m+j+1} + \ldots $$
If we evaluate $q_j$ and $q_{m+j}$ for the polynomial $\tilde{P}_{n}$ we obtain
$$q_j =\frac{(m-1)(m+1)}{m^2}\cdot \frac{(n-j+1)}{(n-j)} \cdot \frac{(j+1)}{j} $$
and
$$q_{m+j} =\frac{(m-1)(m+1)}{m^2}\cdot \frac{(n-m-j+1)}{(n-m-j)} \cdot 
\frac{(m+j+1)}{(m+j)}.  $$
Proposition~\ref{pr:1} is proved.
\end{proof}

\begin{proof}[Proof of Proposition~\ref{prop:anya}] Following Hutchinson's idea, 
   for a polynomial $P(x) = \sum_{k=0}^n a_k x^k$  with  positive 
coefficients,  we can assume without loss of generality, that $a_0=a_1=1,$ 
since we can consider a polynomial $T(x) = a_0^{-1} P (a_0 a_1^{-1}x) $  
instead of $P.$ (We use  the fact that such rescaling of $P$  preserves the 
second quotients  $q_k(T) =q_k(P)$ for all $k.$)  For the sake of brevity, we 
further use notation  $q_k$ instead of  $q_k(P).$ Thereafter, we consider 
a polynomial  
\begin{align}
\label{phi}
Q(x) = T(-x) = 1 - x + \sum_{k=2}^n  \frac{ (-1)^k x^k}
{q_2^{k-1} q_3^{k-2} \cdot\ldots\cdot q_{k-1}^2 q_k}
\end{align}
instead of $P$ (see \eqref{q1111} for the formulas for coefficients).

Suppose that   $x \in \left( \frac{a_{j-1}}{a_{j}},  \frac{a_j}{a_{j+1}}\right) = 
(q_1q_2 \cdot \ldots \cdot q_{j-1}, q_1q_2 \cdot \ldots \cdot q_{j-1} q_{j}).$  
Since  $q_k \geq 1$ for all
$k,  1 \leq k \leq n - 1,$ it is easy to check that
$$1 <  x  < \frac{	x^2}{q_1}< \frac{x^3}{q_1^{2}q_2} < \cdots < 
\frac{x^j}{q_1^{j-1}q_2^{j-2} \cdot \ldots \cdot q_{j-1} }$$
and 
$$\frac{x^j}{q_1^{j-1}q_2^{j-2} \cdot \ldots \cdot q_{j-1} }  > 	
\frac{x^{j+1}}{q_1^{j} q_2^{j-1} \cdot \ldots \cdot q_{j-1}^{2}q_{j}}
> \cdots > \frac{ x^{n}}{q_1^{n-1}q_2^{n-2} \cdot \ldots \cdot  q_{n-2}^{2} q_{n-1} }.$$  
We have
$$(-1)^j Q(x) = \sum_{k=0}^{j-2} \frac{(-1)^{j+k}x^k}{q_1^{k-1}q_2^{k-2}
\cdot \ldots \cdot q_{k-1}}  + \left( -\frac{x^{j-1}}{q_1^{j-2}q_2^{j-3} \cdot 
\ldots \cdot q_{j-2}}\right.$$
$$+ \left. \frac{x^j}{q_1^{j-1}q_2^{j-2} \cdot \ldots \cdot q_{j-1}} - 
 \frac{x^{j+1}}{q_1^j q_2^{j-1} \cdot \ldots \cdot q_{j}}   \right)+ 
\sum_{k=j+2}^{n} \frac{(-1)^{j+k}x^k}{q_1^{k-1}q_2^{k-2} 
\cdot \ldots \cdot q_{k-1}} $$
$$=: \Sigma_1(x) + g(x) + \Sigma_2(x).$$

We note that the terms in $\Sigma_1(x)$ are alternating in sign and increasing in moduli,
wherein the last summand (for $k=j-2$) is positive, 
whence  $\Sigma_1(x) \geq 0$. Analogously, the summands in $\Sigma_2(x)$ are
alternating in sign and their moduli are decreasing, wherein the first summand 
(for $k=j+2$) is positive,  whence  $\Sigma_2 (x)\geq 0.$ Thus,
\begin{equation}
\label{m3}
(-1)^j Q(x)  \geq g(x) \  \mbox{for all} \  x \in \left( \frac{a_{j-1}}{a_{j}},  
\frac{a_j}{a_{j+1}} \right).
\end{equation}
So we obtain   
$$ g(x) = \frac{x^{j-1}}{q_1^{j-2}q_2^{j-3} \cdot 
\ldots \cdot q_{j- 2}} \cdot
\left( - 1 + \frac{x}{q_1q_2 \cdot \ldots \cdot q_{j-1}} - 
 \frac{x^{2}}{q_1^2 q_2^{2} \cdot \ldots \cdot   
 q_{j-2}^2 q_{j-1}^2 q_{j}}   \right).
  $$
Set $x_j = q_1q_2 \cdot \ldots \cdot q_{j-1}\sqrt{q_{j}} 
\in (q_1q_2 \cdot \ldots \cdot q_{j-1}, q_1q_2 \cdot \ldots \cdot q_{j-1} q_{j}) = 
\left( \frac{a_{j-1}}{a_{j}},  \frac{a_j}{a_{j+1}}\right) $ and obtain
$$ g(x) = \frac{x_j^{j-1}}{q_1^{j-2}q_2^{j-3} \cdot 
\ldots \cdot q_{j- 2}} \cdot
(- 1 + \sqrt{q_{j}} -  1) \geq 0,  $$
since, by our assumptions, $q_j \geq 4.$
Proposition~\ref{prop:anya}  is proved. 
\end{proof} 

For the sequence of real numbers $(b_0, b_1, \ldots , b_m)$, let us 
denote by $\nu(b_0, b_1, \ldots , b_m)$ the number of sign changes 
in this sequence (when counting the sign changes we omit zero terms).

\medskip
An immediate consequence of Proposition~\ref{prop:anya} is the following statement. 

\begin{corollary}\label{cor:anya}
{\rm 
 Let $P(x)=\sum_{k=0}^{n} a_k x^k \in 
\mathbb{R}_{+}[x] $ and suppose that $q_k(P) \geq 1$ for all
$k,  1 \leq k \leq n - 1.$ Suppose that there exist a sequence of indices  
$  1 \leq j_1 < j_2 < j_k \leq n - 1$ such that
 $q_{j_s}(P) \geq 4 $ for $1 \leq s \leq k.$  Then 
 the number of real roots of $P$ counting multiplicities is not less than
$$\nu\left((-1)^0, (-1)^{j_1}, (-1)^{j_2}, \ldots , (-1)^{j_k}, (-1)^n\right).$$
}
\end{corollary}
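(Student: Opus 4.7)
The plan is to invoke Proposition~\ref{prop:anya} at each of the $k$ given indices $j_1<\cdots<j_k$, producing test points at which the sign of $P$ is prescribed, and then to extract the claimed lower bound on the number of real roots via the intermediate value theorem.

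First, I would apply Proposition~\ref{prop:anya} at each $j_s$: its hypotheses ($q_k(P)\ge 1$ for every $k$ and $q_{j_s}(P)\ge 4$) are precisely those supplied by the corollary. This produces, for each $s=1,\ldots,k$, a point
$$
x_{j_s}\in\left(-\frac{a_{j_s}}{a_{j_s+1}},\ -\frac{a_{j_s-1}}{a_{j_s}}\right)\qquad\text{with}\qquad (-1)^{j_s}P(x_{j_s})\ge 0.
$$

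Second, I would verify the expected ordering of these points on the negative real axis. The assumption $q_k(P)\ge 1$ rewrites as $a_k^2\ge a_{k-1}a_{k+1}$, which says that the sequence $a_{k-1}/a_k$ is non-decreasing in $k$. Consequently, for $j_s<j_t$ one has $a_{j_s}/a_{j_s+1}\le a_{j_t-1}/a_{j_t}$, so the intervals housing $x_{j_s}$ and $x_{j_t}$ do not overlap, and
$$
0>x_{j_1}>x_{j_2}>\cdots>x_{j_k}.
$$

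Third, I would append the two natural boundary signs: $P(0)=a_0>0$ has sign $(-1)^0$, and since $a_n>0$, the sign of $P(x)$ tends to $(-1)^n$ as $x\to-\infty$. Reading $P$ along the ordered list $0,\,x_{j_1},\,\ldots,\,x_{j_k},\,-\infty$, its values realise (up to zeros at certain $x_{j_s}$) the sign pattern $(-1)^0,(-1)^{j_1},\ldots,(-1)^{j_k},(-1)^n$. By the intermediate value theorem, each sign change between consecutive entries contributes at least one real root in the intervening open interval; the intervals are pairwise disjoint, so the roots are distinct, and the total is at least $\nu\bigl((-1)^0,(-1)^{j_1},\ldots,(-1)^{j_k},(-1)^n\bigr)$. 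Counting with multiplicities only increases this lower bound.

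The single point requiring mild care is the bookkeeping when some $P(x_{j_s})$ happens to vanish: in that case $x_{j_s}$ is itself a real root, and since $\nu$ ignores zero entries the sign-change count is unaffected, so the conclusion is not weakened. Beyond this, I do not anticipate any substantive obstacle, since Proposition~\ref{prop:anya} already encapsulates the analytic content and the remaining steps are combinatorial.
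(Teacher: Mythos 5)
Your proof is correct and follows the same route as the paper's: apply Proposition~\ref{prop:anya} at each $j_s$, use $q_k\ge1$ to show the ratios $a_{k-1}/a_k$ are non-decreasing so the intervals (and hence the points $x_{j_s}$) are strictly ordered, append the boundary signs at $0$ and $-\infty$, and count sign changes. You spell out the intermediate-value bookkeeping and the case $P(x_{j_s})=0$ a bit more explicitly than the paper does, but the substance is identical.
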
 

\begin{proof} To prove the statement, we apply Proposition~\ref{prop:anya}, and for all 
$s,  1 \leq s \leq k,$ we find the sequence of points
$x_{j_s} \in \left(- \frac{a_{j_s}}{a_{j_{s+1}}}, - \frac{a_{j_{s-1}}}{a_{j_s}}\right)$
such that $(-1)^{j_s} P(x_{j_s}) \geq 0.$ It remains to mention that, since
 $q_k(P) \geq 1$ for all $k,  1 \leq k \leq n - 1,$  we have
 $$\frac{a_0}{a_1} \leq  \frac{a_1}{a_2} \leq \frac{a_2}{a_3} \leq \ldots \leq 
 \frac{a_{n-1}}{a_n}, $$
 whence we obtain
 $$ - \infty < x_{j_k} <   x_{j_{k-1}} < \ldots <  x_{j_2} <  x_{j_1} <0, $$
 and
 $$P(0) >0, (-1)^{j_1} P(x_{j_1}) \geq 0, (-1)^{j_2} P(x_{j_2}) \geq 0, \ldots, 
 (-1)^{j_{k-1}} P(x_{j_{k-1}}) \geq 0, $$ 
 $$(-1)^{j_k} P(x_{j_k}) \geq 0, (-1)^n P(- \infty) \geq 0.    $$ 
Corollary~\ref{cor:anya} is proved.
\end{proof} 

\section {Related conjectures  and some counterexamples}\label{sec:app2}

The following conjectures closely related to the topic under consideration were the original motivation for our study. However in the process of working on this paper we were able to disprove two of them. For the sake of completeness let us present them as well as our counterexamples. 

\medskip  

\begin{conjecture}[See Conjecture 8 of \cite{Sh}] Let $P(x) = \sum_{k=0}^n a_k x^k$ be a polynomial with positive coefficients, and consider the related (weighted) tropical polynomial
$$ftrop(P) = \max_k \left(\log(a_k) + kt + \log \binom{n}{k}\right).$$

The number of real zeros of $P (x)$ does not exceed the number of points in the tropical variety defined by $ftrop(t)$, i.e., the number of corners of the piecewise-linear continuous  function $ftrop(t), t\in \bR$.
\end{conjecture}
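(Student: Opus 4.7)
The plan is to rephrase the conjecture in Newton-polygon form. Set $A_k := a_k\binom{n}{k}$ and $y_k := \log A_k$, so that $ftrop(t) = \max_k(y_k + kt)$ is the upper envelope of the lines $\ell_k:\tau\mapsto y_k + k\tau$. A standard Legendre-type argument shows that the corners of $ftrop$ correspond bijectively to the edges of the upper concave hull of the planar points $\{(k, y_k)\}_{k=0}^{n}$; hence their number equals $|V|-1$, where $V\subseteq\{0,1,\dots,n\}$ is the hull vertex set. The conjecture thus asserts $\sharp_r(P)\le |V|-1$, or dually that the number of complex roots of $P$ (counted with multiplicity) is at least $n+1-|V|$, the number of coefficient indices that fall off the upper hull.

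My first line of attack would be induction on $n$ via Rolle's theorem. The derivative $P'(x)=\sum_{j=0}^{n-1}(j+1)a_{j+1}x^j$ again has positive coefficients and degree $n-1$, and the identity $(j+1)\binom{n-1}{j}=\tfrac{(j+1)^2}{n}\binom{n}{j+1}$ shows that the adjusted log-coefficients of $P'$ are $y'_j=y_{j+1}+2\log(j+1)-\log n$ for $j=0,1,\dots,n-1$. Thus passing from $P$ to $P'$ amounts to (i) discarding the leftmost point $y_0$ and (ii) translating the remaining $y_k$ vertically by the strictly concave function $k\mapsto 2\log k-\log n$. Combined with Rolle's bound $\sharp_r(P')\ge \sharp_r(P)-1$, the induction would close provided the hull vertex set $V'$ of $P'$ satisfies $|V'|\le |V|-1$.

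I expect this last hull inequality to be the main obstacle. Removing a single point drops the hull count by at most one, but a strictly concave vertical shift can actually \emph{create} new hull vertices by bending previously collinear segments, and can redistribute which indices become active in subtle ways. Proving $|V'|\le |V|-1$ in full generality appears to require exploiting the specific form $2\log k-\log n$ of the shift together with additional combinatorial information about $V$ --- possibly via a Viro-type patchworking argument identifying corners of $ftrop$ with sign-changing edges of the tropical hypersurface of $P(-x)$, or by refining the inductive hypothesis to track the distribution of ``dropout'' indices along the sequence.

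Given the preamble of this section, in which the authors announce that two of the listed conjectures are refuted, I would also run a counterexample search in parallel with the proof attempt. A natural family to probe is $P(x)=(x+a)^{n-2j}\prod_{i=1}^{j}(x^2+b_ix+c_i)$, in which the irreducible quadratic factors are tuned so that several well-separated indices drop off the upper hull of $(k,y_k)$ while $P$ retains its $n-2j$ real roots counted with multiplicity. If one can engineer strictly more than $2j$ dropouts in such a family, then $\sharp_r(P)=n-2j>|V|-1$ and the conjecture fails; in view of the flexibility introduced by playing the ratios $c_i/b_i^2$ against the location of the repeated real root $-a$, I would expect an explicit refutation to be attainable at moderate degree, likely already for $n=5$ or $n=6$.
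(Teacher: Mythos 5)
This statement is Conjecture~8 of \cite{Sh}; the paper does not prove it. In \S 5 the authors state explicitly that they were able to disprove only \emph{two} of the three listed conjectures, and the counterexamples they construct (the sequence $Q_n$) address Conjectures~\ref{conj2} and \ref{conj3} only. Conjecture~8 is left open, so there is no proof in the paper to compare yours against, and your text should not be presented as a proof either.

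On its own terms your proposal has a genuine, self-acknowledged gap at exactly the load-bearing step. The reduction to the upper concave hull of $\{(k,\log(a_k\binom{n}{k}))\}$ is correct, as is the computation $y'_j=y_{j+1}+2\log(j+1)-\log n$ for the derivative. But the induction requires $|V'|\le|V|-1$, and this is false as a general statement about hulls: deleting the point at $k=0$ removes at most one vertex, while the strictly concave correction $j\mapsto 2\log(j+1)$ bends the configuration upward and can promote interior points onto the hull, so $|V'|$ can a priori exceed $|V|-1$. Without a lemma controlling this (which would have to use more than concavity of the shift, since Rolle gives no slack), the argument does not close. Your counterexample heuristic is also inconclusive, and it is worth noting why Conjecture~8 resists the paper's own construction: with the binomial weights, the local condition for index $k$ to fall off the hull is $q_k<\frac{k(n-k)}{(k+1)(n-k+1)}$, which is strictly stronger than the thresholds $q_k<\frac{k}{k+1}$ and $q_k<1$ governing Conjectures~\ref{conj2} and \ref{conj3}; in the paper's family $Q_n$ one has $q_{n-5}\to\frac{9}{10}$, which exceeds the corresponding weighted threshold $\frac{5}{6}$, so those polynomials do not refute Conjecture~8. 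The honest conclusion is that both branches of your plan terminate in open problems.
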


\begin{conjecture}[See Conjecture 9 of \cite{Sh}]\label{conj2}  Let $P (x) = \sum_{k=0}^n a_k x^k$ be a polynomial with positive coefficients. Consider the differences
$$\tilde  c_k= ( k + 1 ) a_k^2-k a_{k-1} a_{k +1},$$
where $a_{-1} =a_{n+1} =0$.  Let $0=k_1 <k_2 <\cdots <k_m =n$ be the sequence of all indices $k_i$ such that $\tilde c_{k_i}$ is positive and let $v(P)$ be the number of changes in the (binary) sequence 
$\{k_i \mod 2\}_{i=0}^m$. 

The number of real zeros of $P (x)$ does not exceed $v( P ).$ 
\end{conjecture}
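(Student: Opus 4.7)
The plan is to show that this conjecture is \emph{false} by exhibiting an explicit polynomial $P(x)\in\mathbb{R}_{+}[x]$ whose number of real (necessarily negative) roots strictly exceeds $v(P)$.

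A useful preliminary observation constrains where a counterexample could live. If $P$ has only real roots, then Theorem~N gives
\[
q_k(P)\;\ge\;\frac{k+1}{k}\cdot\frac{n-k+1}{n-k}\;>\;1\;>\;\frac{k}{k+1},
\]
so every $\tilde c_k=(k+1)a_k^2-ka_{k-1}a_{k+1}$ is strictly positive. Hence $\{k_i\}=\{0,1,\dots,n\}$, the parity sequence alternates, and $v(P)=n$; the conjecture is (loosely) correct in this regime. A counterexample must therefore possess some complex conjugate roots that flip several $\tilde c_k$ to be $\le 0$, and moreover must do so in a way that collapses the \emph{parity pattern} of the surviving indices $\{k_i\}$, not merely their number.

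Concretely, I would search within the template
\[
P(x)\;=\;(x+1)^{r}\,\prod_{j=1}^{s}(x^2+b_jx+c_j),\qquad n=r+2s,\quad b_j^2<4c_j,
\]
choosing the quadratic factors with $b_j$ small and $c_j$ comparatively large, so that many $q_k(P)$ fall below the thresholds $k/(k+1)$. The ideal outcome is to kill \emph{every} odd-indexed $\tilde c_k$ with $1\le k\le n-1$ (or, dually, every even-indexed one); then the retained set $\{k_i\}$ has nearly constant parity, so $v(P)\in\{0,1,2\}$, while $P$ retains its $r$ real roots at $x=-1$. The concrete execution is a guided numerical sweep over $b_j,c_j$ for small degrees $n=6,7,8,9$, computing the vector $(\tilde c_k)_{k=0}^n$, reading off $\{k_i\}$ and $v(P)$, and comparing with the real-root count. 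As soon as real roots $>v(P)$ on some parameter choice, the conjecture is disproved; the verification is then a direct arithmetic check.

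The main obstacle is the built-in tension between the two requirements. Forcing $q_k\le k/(k+1)$ at many indices pushes the log-coefficient sequence toward a highly convex shape, which typically conflicts with the presence of several real roots; conversely, having many real roots drives the $q_k$'s up (by Newton), repopulating $\{k_i\}$ and inflating $v(P)$. Finding the sweet spot where suppressing $\tilde c_k$'s lowers $v(P)$ below the real-root count without destroying the real roots themselves is the delicate step. A secondary obstacle is combinatorial: the endpoints $k_1=0$ and $k_m=n$ have fixed parities $0$ and $n\bmod 2$, so the parity sequence is forced to change at least $n\bmod 2$ times; one should aim first at even degree $n$ where $v(P)=0$ is conceivably achievable.
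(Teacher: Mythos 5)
Your proposal is a search plan, not a proof: no counterexample is actually exhibited, and ``run a numerical sweep and stop when a violation appears'' leaves the entire mathematical content undone. Beyond that, two of your specific choices point the search in a direction where it would likely fail. First, the target of killing \emph{every} odd-indexed $\tilde c_k$ so that $v(P)\in\{0,1,2\}$ is both unattainable (for exactly the Newton-type reason you yourself identify: many real roots force the $q_k$ up) and unnecessary. The right quantitative observation is that $\tilde c_k>0$ is equivalent to $q_k>k/(k+1)$, that $v(P)\equiv n\pmod 2$, and that a polynomial of the shape $(x+1)^{n-2}(Ax^2+Bx+C)$ with the quadratic irreducible has exactly $n-2$ real roots; hence it suffices to force $v(P)\le n-4$, and deleting just \emph{three consecutive} interior indices from $\{k_i\}$ already does this (the two surviving neighbours have equal parity, so four sign changes disappear). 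Second, the low-degree sweep over $n=6,\dots,9$ is the wrong place to look: there the thresholds $k/(k+1)$ sit far below $1$, while $n-2$ real roots push the $q_k$ well above $1$ at most indices, so the window you need almost certainly does not open.

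The paper's construction shows what actually works. One starts from a degree-$15$ seed $Q_{15}(x)=(x+1)^{13}\bigl(\tfrac{19}{360360}x^2-\tfrac{89}{720720}x+\tfrac{83}{720720}\bigr)$, which has positive coefficients, $13$ real roots, and $q_8,q_9,q_{10}<1$ (this already refutes Conjecture~\ref{conj3}, whose condition is $q_k\ge 1$, but \emph{not} Conjecture~\ref{conj2}, since e.g.\ $q_{10}=99/100>10/11$). One then integrates repeatedly, $Q_n(x)=\int_{-1}^xQ_{n-1}(t)\,dt$, preserving positivity of the coefficients and the factor $(x+1)^{n-2}$; under integration the three small quotients migrate to positions $k=n-5,n-6,n-7$, where the thresholds $k/(k+1)$ tend to $1$ while the quotients tend to limits strictly below $1$. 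Only for $n$ large do the inequalities $q_k<k/(k+1)$ finally hold at three consecutive indices, giving $v(Q_n)\le n-4<n-2=\sharp_r(Q_n)$. If you want to salvage your approach, you need (i) the parity/counting lemma above to know how small $v(P)$ must be made, and (ii) either this integration trick or some other mechanism for working in high degree, where the thresholds are favourable.
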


\begin{conjecture}[See Conjecture 10 of \cite{Sh}]\label{conj3}  Let $P(x) =\sum_{k=0}^n a_k x^k$  be a polynomial with positive coefficients. Consider the differences
$$c_k = a_k^2-a_{k-1}a_{k + 1},\quad k=0,1,\dots, n$$
where $a_{-1} =a_{n+1} =0$.  Let $0=k_1 <k_2 <\cdots<k_m =n$ be the sequence of all
 indices such that $c_{k_i}$ is non-negative, and let $v( P )$  be the number of changes in the
sequence $\{k_i \mod 2\}_{i=0}^m$. 

The number of real zeros of  $P (x)$ does not exceed $v( P).$ 
\end{conjecture}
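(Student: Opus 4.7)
My plan is to begin by directly verifying the conjecture for low degrees, say $n \leq 4$. For these I would enumerate the sign patterns of $c_1,\dots,c_{n-1}$, compute $v(P)$ for each pattern, and compare against the maximum real-root count compatible with the given pattern. (For instance, by the classical Newton inequalities, $P$ having all $n$ real roots forces all $c_k \geq 0$, and so $v(P) = n$.) This both confirms the statement in low degree and suggests the shape of extremal polynomials.

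For general $n$, my first attempt would parallel the Hermite/Hutchinson strategy used in the proof of Proposition~\ref{prop:anya}: translate the condition $c_k \geq 0$ (equivalently $q_k \geq 1$) into monotonicity of the ratios $a_{k-1}/a_k$ on appropriate index ranges, locate intervals of $(-\infty,0)$ on which truncated sums of $P$ are sign-definite, and count oscillations from such sign information. Each index $k$ at which $c_k < 0$ should represent a break in the log-concavity of the coefficients, and I would try to show that this break removes exactly two potential sign-alternations of $P$, matching the parity bookkeeping that defines $v(P)$. The target is then an upper bound $\sharp_r(P) \leq v(P)$ obtained by summing these local obstructions.

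The crux of the difficulty is that $c_k \geq 0$ carries \emph{no quantitative margin}. In the proof of Proposition~\ref{prop:anya}, the slack $\sqrt{q_k}-2 \geq 0$ provided by the hypothesis $q_k \geq 4$ was precisely what made the alternating-sign argument close at the test point $x_j = q_1 q_2 \cdots q_{j-1}\sqrt{q_j}$; under only $q_k \geq 1$, the analogous quantity is $\sqrt{q_k}-2 \leq -1$, so the Hermite-type step produces a quantity of the wrong sign and the proof degenerates. For this reason I do not expect a direct extension of Proposition~\ref{prop:anya} to succeed, and some genuinely new idea — perhaps an inductive scheme coupled with a careful analysis of how $c_k(P)$ relates to $c_k(P')$ under differentiation plus Rolle — would be needed to push through a proof.

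In view of this, and given that the paper's abstract announces counterexamples to two of the three cited conjectures, in parallel with the proof attempt I would mount a counterexample search. The natural candidates are polynomials $\prod_i (x+\alpha_i)\cdot\prod_j(x^2+b_jx+c_j)$ with irreducible quadratic factors and the linear roots $-\alpha_i$ placed on widely disparate scales, so that the coefficient sequence is log-concave only in certain windows and the index set $\{k_i\}$ skips enough middle indices to force $v(P) < \sharp_r(P)$. The key engineering challenge is to couple a large number of real roots with a carefully placed cluster of indices $k$ where $c_k < 0$ in a pattern (e.g., two skips separated by an even gap) that reduces $v(P)$ by $4$ or more. If such a family can be found numerically, the ``proof'' of the conjecture will instead be a concrete counterexample, which I regard as the more likely outcome.
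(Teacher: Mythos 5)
Your proposal correctly reads the situation: the conjecture is false, and the paper's ``proof'' of this statement is in fact a \emph{disproof} by explicit counterexample. You also correctly diagnose why a Hutchinson-style argument cannot carry through: under the bare hypothesis $q_k\ge 1$ the quantity $\sqrt{q_j}-2$ has the wrong sign, so the key step in the proof of Proposition~\ref{prop:anya} loses all its slack. On both of these points you are on the same page as the authors.

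The gap is that a research plan for finding a counterexample is not a counterexample, and your proposed candidate family points in a somewhat different direction from what actually works. You suggest products $\prod_i(x+\alpha_i)\prod_j(x^2+b_jx+c_j)$ with the real roots placed on \emph{widely disparate scales} so as to break log-concavity in windows. The paper instead concentrates all the real roots at a single point and tunes a single complex quadratic factor:
$$Q_{15}(x)=(x+1)^{13}\Bigl(\tfrac{19}{360360}x^2-\tfrac{89}{720720}x+\tfrac{83}{720720}\Bigr),$$
which has all positive coefficients, $13$ real roots, and $q_8,q_9,q_{10}<1$ (i.e., $c_8,c_9,c_{10}<0$). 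Removing the three consecutive indices $8,9,10$ from the $k_i$-sequence drops $v(P)$ to $11<13$, disproving the conjecture in degree $15$; the authors then push the counterexample to all higher degrees by repeated integration $Q_n(x)=\int_{-1}^x Q_{n-1}(t)\,dt$, checking that the relevant $q_{n-5},q_{n-6},q_{n-7}$ stay below $1$ while the root count stays at $n-2$. Your heuristic ``cluster of indices with $c_k<0$ reducing $v(P)$ by $4$'' is in fact realized (three consecutive negatives cost exactly $4$ changes), so your bookkeeping instinct is right; what is missing is the concrete construction, and in particular the insight that a high-order root at $-1$ (rather than roots on disparate scales) is what lets a single low-degree quadratic factor create a cluster of $c_k<0$ while keeping all coefficients positive.
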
 

\medskip
Next we provide counterexamples to Conjectures~\ref{conj2} and \ref{conj3}. Consider the polynomial

$$Q_{15}(x)=(x+1)^{13}\left(\frac{19}{360360}x^2-\frac{89}{720720}x+\frac{83}{720720}\right)=\frac{19}{360360}x^{15}+\frac{9}{16016}x^{14}+\frac{3}{1144}x^{13}$$
$$+\frac{1}{144}x^{12}+\frac{1}{88}x^{11}+\frac{1}{80}x^{10}+\frac{1}{72}x^9+\frac{3}{112}x^8+\frac{3}{56}x^7+\frac{11}{144}x^6+\frac{3}{40}x^5+\frac{9}{176}x^4+\frac{19}{792}x^3+\frac{17}{2288}x^2$$
$$+\frac{x}{728}+\frac{83}{720720}.$$

\medskip
Obviously, $\deg Q_{15}(x)=15$ and its number of real roots in 13. One has

$$\begin{cases}
q_{10}=\frac{88\cdot 72}{80^2}=\frac{99}{100}<1\\
q_{9}=\frac{88\cdot 112}{3\cdot 72^2}=\frac{140}{243}<1\\
q_{8}=\frac{72\cdot 3^2\cdot 56}{3\cdot 112^2}=\frac{27}{28}<1.
\end{cases}
$$

\medskip
Observe that  in the sequence of indices $0=k_1 <k_2 <\cdots<k_m =n$ appearing in Conjecture~\ref{conj3} each $k_j$ corresponds exactly to the case $q_{k_j}\ge 1$. Therefore among $(k_1,k_2,\dots, k_m)$ the values $8,9,10$ are missing. Thus the number of changes in the
sequence $\{k_i \mod 2\}_{i=0}^m$ is less than or equal to $15-4=11$. But $Q_{15}(x)$ has $13$ real zeros which is a contradiction with Conjecture~\ref{conj3} in degree $15$. 

To obtain counterexamples in higher degrees let us introduce the sequence of primitives $\{Q_n(x)\}_{n=15}^\infty$ such that $Q_{15}(x)$ is given above and for $n>15$
we set 
$$Q_{n}(x)=\int_{-1}^x Q_{n-1}(t)dt.$$
One easily checks that all coefficients of $Q_n(x)$ are positive and the multiplicity of the real root at $-1$ increases provided that the number of real roots of $Q_n(x)$ equals $(n-2)$. 

\medskip
For the coefficients of the polynomial $Q_n(x)$, we obtain 
$$q_{n-5}=\frac{99}{100}\cdot \frac{10\cdot 12}{11^2}\cdot \frac{11\cdot 13}{12^2} \cdots \frac{(n-1)\cdot(n+1)}{n^2}=\frac{99}{100}\cdot \frac{10}{11}\cdot \frac{n+1}{n}\to \frac{9}{10}, \quad n\to \infty
$$
$$q_{n-6}=\frac{140}{243}\cdot \frac{9\cdot 11}{10^2}\cdot \frac{10\cdot 12}{11^2} \cdots \frac{(n-1)\cdot(n+1)}{n^2}=\frac{140}{243}\cdot \frac{9}{10}\cdot \frac{n+1}{n}\to \frac{14}{27}, \quad n\to \infty
$$
$$q_{n-7}=\frac{27}{28}\cdot \frac{8\cdot 10}{9^2}\cdot \frac{9\cdot 11}{10^2} \cdots \frac{(n-1)\cdot(n+1)}{n^2}=\frac{27}{28}\cdot \frac{8}{9}\cdot \frac{n+1}{n}\to \frac{6}{7}, \quad n\to \infty.
$$

\medskip
Moreover, every $q_{n-5}, q_{n-6}, q_{n-7}$ given above is strictly less than $1$. By the same reason as above the number of sign changes in $\{k_i \mod 2\}_{i=0}^m$ is less than or equal to $n-4$ while the number of real roots of $Q_n(x)$ is $n-2$. Contradiction.  

\medskip

The same sequence $\{Q_n(x)\}$ provides counterexamples to Conjecture~\ref{conj2} for all sufficiently large $n$. Indeed, we have that the additional factor $\frac{k}{k+1}$ tends to $1$ for $k\to \infty$. In particular, we get $q_{n-5}<\frac{n-5}{n-4}$,  $q_{n-6}<\frac{n-6}{n-5}$,  $q_{n-7}<\frac{n-7}{n-6}$  for all large $n$. The same argument as above provides the required counterexamples.

%************************* References ***********************************
%

\section{Final remark and outlook} \label{sec:outlook}

\noindent
{\bf 1.} Our proofs of the main results are  obtained by ``brute force". We are currently looking for their more conceptual proofs and generalizations of the Newton inequalitites and the Hutchinson theorem. 

\medskip
\noindent
{\bf 2.} Theorems~\ref{th:1} and  \ref{th:3} provide relevant inequalities on the sum of two $q_i$'s. Such a domain will not transform into a polytope under the logarithimic map. Maybe to obtain sharper results more sophisticate domains should be used  in the main Problem~\ref{prob:main}.

\medskip
\noindent
{\bf 3.} For the set of polynomials with positive coefficients and all real (negative) roots one can show that there exist and unique minimal with respect to inclusion inscribed and  circumscribed polytopes, see \cite{KoSh, PRS}. In the paper \cite{PRS} one can find connection of this topic with amoebas, resultants and multiplier sequences. It seems that for the majority of domains formed by polynomials with the number of real roots exceeding or below some given number of roots there is no uniqueness of inscribed and/or circumscribed polygons.  This question seems to be very important in this area of research and should be answered.

\end{document}